\documentclass[12pt, reqno]{amsart}

\usepackage{amssymb,amscd,amsthm, verbatim,amsmath,color,fancyhdr, mathrsfs}
\usepackage{graphicx}
\usepackage{turnstile,cite}
\usepackage[plainpages=false,pdfpagelabels]{hyperref}
\usepackage{hyperref}
\usepackage{orcidlink}
\usepackage{setspace}
\usepackage{enumitem}
\hypersetup{colorlinks,%
citecolor=red,%
filecolor=black,%
linkcolor=blue,%
urlcolor=blue
}
\usepackage[varg]{pxfonts}

\allowdisplaybreaks
\usepackage[letterpaper, left=2.5cm, right=2.5cm, top=2.5cm,
bottom=2.5cm,dvips]{geometry}
\newtheorem{theorem}{Theorem}[section]
\newtheorem{corollary}[theorem]{Corollary}
\newtheorem{lemma}[theorem]{Lemma}

\theoremstyle{definition}
\newtheorem{definition}[theorem]{Definition}
\newtheorem{remark}[theorem]{Remark}
\newtheorem{example}[theorem]{Example}
\numberwithin{equation}{section}
\begin{document}
\title{Fixed Point Theorems for Hardy--Rogers Type Multivalued Mappings  in $b$-Metric Spaces}

%%
%% Now edit the following to give your name and address:
%%
\author{Hezekiah Seun Adewinbi ${^{*1}}$\orcidlink{0009-0003-0643-2146}, Oluwatosin Temitope Mewomo${^2}$\orcidlink{0000-0003-0389-7469}}
\address{${^1}$Department of Mathematical Sciences, Kent State University,
	Kent, OH, 44242, United States.}
\email{hadewinb@kent.edu}
\address{${^2}$Department of Mathematics,
	Tarleton State University,
	Stephenville, Texas,
	United States}
\email{omewomo@tarleton.edu}

\keywords{$b$-metric Space, multivalued mappings, fixed point, strict fixed point, $p$-Hardy-Rogers type, $h$-upper semicontinuous\\
		{\rm 2020} {\it Mathematics Subject Classification}: 54H25, 47H10\\
*Corresponding author: Hezekiah Seun Adewinbi (hadewinb@kent.edu)}

	\begin{abstract}
This paper establishes new existence and uniqueness theorems for  $p$-Hardy–Rogers type multivalued mappings in complete 
$b$-metric spaces. Our approach replaces the traditional constant coefficients in the contractive condition with distance-dependent control functions and incorporates a $p$-order iteration scheme, thereby providing a more general and flexible structure that encompasses a wider class of multivalued operators. Our findings significantly improve and generalize the Hardy–Rogers type fixed point theorems obtained by Chaib et al. (2026) by relaxing the required assumptions, incorporating the $h$-upper semicontinuity to the hypotheses and by considering the $p$-th iterate of the mapping. Several examples are provided to demonstrate the applicability of our results.
\end{abstract}

	\maketitle
	\section{Introduction}
	
The classical Banach contraction principle \cite{Banach} provides existence and uniqueness results for single-valued mappings in complete metric spaces. Over time, this theory has been extended to both single-valued and multivalued mappings to accommodate more complex mathematical models. Notable contributions include the work of Hardy and Rogers \cite{Hardy}, who generalized Kannan \cite{Kannan}, Chatterjea\cite{Chatterjea} and Reich \cite{Reich} contractive conditions by incorporating multiple distance terms. 

The Banach contraction principle was extended to the multivalued setting by Nadler \cite{Nadler} in 1969, who introduced multivalued contraction mappings from a complete metric space $X$ into $\mathcal{CB}(X)$, the family of all nonempty closed and bounded subsets of $X$. Subsequently, Czerwik et al.\cite{Czerwik1998} extended Nadler’s results to the more general framework of $b$-metric spaces.

In 1989, Bakhtin \cite{Bakhtin} and Czerwik \cite{Czerwik} introduced the concept of a $b$-metric space by relaxing the triangle inequality of standard metric spaces.  Since then, numerous authors have investigated fixed point theorems for both single-valued and multivalued mappings in metric and $b$-metric spaces (see \cite{Lael, Mitrovic,  Khojasteh, Leyew, Chaib, Joonaghany, Aliouche, Karapinar2022, AliHussain, AliKhan, Errai, Patel, Aleksic, Chifu, Al-Izeri, Andres
 }  and the references therein).

In a recent work, Chaib et al. \cite{Chaib2026} established several fixed point results for multivalued mappings satisfying a Hardy–Rogers type contraction in the framework of $b$-metric spaces, which improve upon the corresponding results of Chifu and Petruşel \cite{Chifu2017}.

Motivated by the aforementioned developments, in this paper we establish existence and uniqueness results for 
$p$-th iterate Hardy–Rogers type multivalued mappings in complete 
$b$-metric spaces. Our results improve and generalize the corresponding Hardy–Rogers type theorems of Chaib et al. \cite{Chaib2026} under weaker assumptions. Furthermore, our framework accommodates the case in which the Hardy–Rogers-type multivalued mapping is $h$-upper semicontinuous and guarantees the existence of a fixed point. This extends previous work by explicitly incorporating $h$-upper semicontinuity into the theorem hypotheses. Several illustrative examples are also provided.
	\section{Preliminaries}
For the convenience of the reader, we collect here some basic facts and establish the notation and terminology that will be used in subsequent sections.
	
	\begin{definition}[Bakhtin \cite{Bakhtin} and  Czerwik \cite{Czerwik}]
		\label{b-metric space}
		Let $X$ be a nonempty set and let $s\geq 1$ be a given real number. A function $d:X\times X\to\mathbb{R}_+$ is called a $b$-metric if for all $x,y,z\in X$,
		\begin{enumerate}[label=(\alph*)]
			\item $d(x,y)=0$ if and only if $x=y$;
			\item $d(x,y)=d(y,x)$;
			\item \label{inequality}$d(x,z)\leq s[d(x,y)+d(y,z)]$.
		\end{enumerate}
		A pair $(X,d,s)$ is called a $b$-metric space. 
	\end{definition}

\begin{example}[Snowflake]
	Let $(X, d_0)$ be a metric space, and let $n > 1$. Define  
	$d(x, y) = d_0(x, y)^n.$ Thus, $(X, d)$ is a $b$-metric space with constant $s = 2^{n- 1}$.
\end{example}
 Let $(X, d)$ be a $b$-metric space with $s \geq 1$.  The following notions and properties can be found in \cite{Nadler,Covitz, Singh2005}. 
\begin{itemize}
	\item $P(X)$ is the family of all nonempty subsets of $X$.
	\item $\mathcal{C}(X)$ is the family of nonempty closed subsets of $X$.
	\item $\mathcal{CB}(X)$ is the family of nonempty bounded and closed subsets of $X$.
	\item $D(A, B) = \inf\{d(a, b) : a \in A, b \in B\}$, for all nonempty subsets $A, B$ of $X$. 
	\item $D (\{a\} , B) = D (a, B)$ whenever $A = \{a\}$.
	\item $\delta(A, B) =\displaystyle  \sup_{x \in A} D(x, B)$.
	\item $d(a, B) = \inf \{d(a, b): b\in B\}$.
\end{itemize}

\begin{definition}
	Let $X$ and $Y$ be nonempty sets. A mapping $T:X\to Y$ is \textbf{multivalued }if $T$ is a function from $X$ to the power set of $Y$. i.e. $T:X\to 2^Y. $
\end{definition}
\begin{remark}
	Every single valued mapping can be viewed as a multivalued mapping. Indeed, let $f:X\to Y$ be a single valued mapping. Define $T:X\to 2^Y$ by $Tx=\{f(x)\}.$
\end{remark}
\begin{definition}
	Let $T : X \to \mathcal{C}(X)$ be a multivalued operator. A point $x \in X$ is called a \textbf{fixed point} for $T$ if and only if $x \in Tx$. The set  $\text{Fix}(T) := \{x \in X \mid x \in Tx\}$ is called the fixed point set of $T$. 
\end{definition}

\begin{definition}
	Let $(X, d)$ be a $b$-metric space. The  \textbf{generalized Hausdorff $b$-metric} $H$ on $\mathcal{C}(X)$, the collection of all non-empty closed subsets of $(X, d)$, is defined as follows:
	$$
	H(A, B) = 
	\begin{cases}
		\max\{\delta(A, B), \delta(B, A)\}, & \text{if the maximum exists;} \\
		\infty, & \text{otherwise},
	\end{cases}
	$$
	where $A, B \in  \mathcal{C}(X)$.
	If $A, B \in \mathcal{CB}(X)$, then $H(A, B) = \max\{\delta(A, B), \delta(B, A)\}$.
\end{definition}

\begin{definition}
	A multivalued mapping $ T : X \to \mathcal{C}(X) $ is \textbf{continuous} if and only if, for each $ x \in X $ and $ \{x_n\} \subset X $ with $ \displaystyle\lim_{n \to \infty} d(x_n, x) = 0 $, we have $ \displaystyle \lim_{n \to \infty} H(Tx_n, Tx) = 0 $.
\end{definition}

\begin{definition}
	\label{Continuous}
	A multivalued mapping $ T : X \to \mathcal{C}(X) $ is \textbf{$h$-upper semicontinuous} if and only if, for each $ x \in X $ and $ \{x_n\} \subset X $ with $ \displaystyle \lim_{n \to \infty} d(x_n, x) = 0 $, we have  
	$ \displaystyle 	\lim_{n \to \infty} \delta(Tx_n, Tx) = 0.$
\end{definition}

\begin{remark}
	$T$ is continuous implies that it is $h$-upper semicontinuous.
\end{remark}

\begin{lemma}
	\label{lemmaalpha}
	Let $A$ and $B$ be distinct elements of $\mathcal{C}(X)$. Let $x \in A$. Then, for a given number $\alpha > 1$, there exists a point $y \in B$ such that
	$$
	d(x, y) \leq \alpha H(A, B).
	$$
\end{lemma}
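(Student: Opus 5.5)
The argument rests only on the definition of $H$ and of $\delta$ as a supremum of infima. Since $x\in A$, we have $d(x,B)\le \delta(A,B)\le H(A,B)$ directly from the definitions. If $H(A,B)=\infty$ there is nothing to prove: any $y\in B$ works, and $B$ is nonempty. So I will assume $H(A,B)<\infty$.

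The key observation is that $H(A,B)>0$, because $A$ and $B$ are distinct closed sets. Indeed, if $H(A,B)=0$ then $\delta(A,B)=\delta(B,A)=0$. This means $d(a,B)=0$ for every $a\in A$, so each $a\in A$ is the limit of a sequence in $B$. Since $B$ is closed, this gives $A\subseteq B$, and symmetrically $B\subseteq A$, contradicting $A\neq B$.

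One subtlety is the meaning of ``closed'' in a $b$-metric space. Here closedness is understood sequentially: if $d(b_n,a)\to 0$ with $b_n\in B$, then $a\in B$. This is the step I expect to need the most care, since the $b$-metric need not be continuous. It is nevertheless exactly what the definition of $\mathcal{C}(X)$ provides when read sequentially, so I will use that reading.

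Given $0<H(A,B)<\infty$ and $\alpha>1$, we have $\alpha H(A,B)>H(A,B)\ge d(x,B)=\inf_{b\in B}d(x,b)$. By the definition of the infimum, there is a $y\in B$ with $d(x,y)<\alpha H(A,B)$, which in particular gives $d(x,y)\le \alpha H(A,B)$. The hypotheses $A\neq B$ and $\alpha>1$ are both needed, since they are what make the inequality strict and hence allow an approximate minimizer to be chosen.
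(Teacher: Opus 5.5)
Your proof is correct. The paper gives no proof of this lemma; it quotes it as a standard preliminary from the literature (Nadler; Covitz and Nadler). Your argument is the standard one behind it: first $H(A,B)>0$ by distinctness and closedness, then the definition of $d(x,B)$ as an infimum gives a point $y\in B$ with $d(x,y)<\alpha H(A,B)$.

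Two minor remarks. Your concern about continuity of $d$ is unnecessary, since under either the sequential or the ball-topology notion of closedness, $d(a,B)=0$ already forces $a\in B$. Also, your closing claim overstates the role of $A\neq B$: if $H(A,B)=0$ then $A=B$ and $y=x$ works, so the distinctness hypothesis is dispensable.
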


\begin{theorem}
	Let $(X, d)$ be a complete metric space and let $x_0 \in X$. If $T: X \to \mathcal{C}(X)$ is a Nadler multi-valued contraction, i.e. there exists $0 \leq q < 1$ such that 	for all $x, y \in X$
		\begin{eqnarray*}
		H(Tx, Ty) \leq q \, d(x, y).
	\end{eqnarray*}
	Then there exists an iterative sequence $\{x_n\}_{n=1}^\infty$ of $T$ at $x_0$ such that $\{x_n\}_{n=1}^\infty$ converges to a fixed point of $T$.
\end{theorem}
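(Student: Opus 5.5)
We prove Nadler's theorem with the standard iterative construction, using Lemma \ref{lemmaalpha} to pick successive points while controlling the distances geometrically. If $q=0$, then $H(Tx_0,Tx_1)=0$ for any $x_1\in Tx_0$. Since both sets are closed, $Tx_1=Tx_0\ni x_1$, so $x_1$ is a fixed point and the constant sequence works. Assume therefore $0<q<1$, and fix $\alpha>1$ with $\alpha q<1$, for instance $\alpha=q^{-1/2}$.

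\textbf{Construction.} Start with any $x_1\in Tx_0$. Suppose $x_n\in Tx_{n-1}$ has been chosen. If $x_n\in Tx_n$ we have a fixed point and may take $x_{k}=x_n$ for all $k\ge n$. If $Tx_{n-1}=Tx_n$, then again $x_n\in Tx_n$. Otherwise the sets are distinct, and Lemma \ref{lemmaalpha} applied to $A=Tx_{n-1}$, $B=Tx_n$ and the point $x_n\in A$ gives $x_{n+1}\in Tx_n$ with
\[
d(x_n,x_{n+1})\le \alpha H(Tx_{n-1},Tx_n)\le \alpha q\, d(x_{n-1},x_n).
\]
Setting $\lambda:=\alpha q<1$ and inducting, we get $d(x_n,x_{n+1})\le \lambda^n d(x_0,x_1)$ for all $n$.

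\textbf{Cauchy property.} By the triangle inequality, for $m>n$,
\[
d(x_n,x_m)\le \sum_{k=n}^{m-1}\lambda^k d(x_0,x_1)\le \frac{\lambda^n}{1-\lambda}d(x_0,x_1)\to 0 .
\]
Hence $\{x_n\}$ is Cauchy, and by completeness $x_n\to x^*$ for some $x^*\in X$.

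\textbf{The limit is a fixed point.} Since $x_{n+1}\in Tx_n$,
\[
d(x^*,Tx^*)\le d(x^*,x_{n+1})+d(x_{n+1},Tx^*)\le d(x^*,x_{n+1})+H(Tx_n,Tx^*)\le d(x^*,x_{n+1})+q\,d(x_n,x^*)\to 0 .
\]
Thus $d(x^*,Tx^*)=0$. Because $Tx^*$ is closed, $x^*\in Tx^*$.

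\textbf{Main obstacle.} The delicate point is the selection step. The Hausdorff distance $H(Tx_{n-1},Tx_n)$ may not be attained and may even be infinite for unbounded closed sets. The factor $\alpha>1$ in Lemma \ref{lemmaalpha} handles non-attainment, but it has to be chosen so that $\alpha q$ remains below $1$. The degenerate cases $q=0$ and $Tx_{n-1}=Tx_n$ must also be treated separately, as done above, since the lemma needs distinct sets. The possibility $H=\infty$ is ruled out by the contraction hypothesis itself, because $H(Tx,Ty)\le q\,d(x,y)<\infty$ for all $x,y$.
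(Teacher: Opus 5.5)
Your proof is correct and complete. The paper gives no proof of this statement; it is quoted in the preliminaries as the known Nadler (Covitz--Nadler) theorem. Your argument is the standard one, and it uses the same device the paper relies on in the Cauchy step of Theorem~\ref{Main2}: Lemma~\ref{lemmaalpha} with a fixed $\alpha>1$ such that $\alpha q<1$. Because $q$ is a constant and $d$ is a genuine metric, your direct geometric-series estimate does the job that Lemma~\ref{lem} does in the paper, and you correctly cover what the lemma needs, namely distinct sets and finite $H$.
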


\begin{lemma}
	\label{epsilon}
	If $A,B\in \mathcal{CB}(X)$ and $a\in A$, then for each $\epsilon>0$, there exists $b\in B$ such that 
	\begin{eqnarray*}
		d(a,b)\leq H(A,B)+\epsilon.
	\end{eqnarray*}
	
\end{lemma}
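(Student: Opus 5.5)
The statement is Lemma~\ref{epsilon}. I would derive it directly from the definitions of $H$, $\delta$ and $d(\cdot,\cdot)$ as an infimum, without invoking Lemma~\ref{lemmaalpha}.

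First I check that $H(A,B)$ is a finite real number. Since $A,B\in\mathcal{CB}(X)$, both sets are bounded. For $a\in A$ and a fixed $b_0\in B$, the $b$-triangle inequality \ref{inequality} gives $d(a,B)\le d(a,b_0)\le s[d(a,a_0)+d(a_0,b_0)]$ for any fixed $a_0\in A$. This bound is uniform in $a$ because $A$ is bounded, so $\delta(A,B)<\infty$. Symmetrically $\delta(B,A)<\infty$. Hence, by the remark in the definition, $H(A,B)=\max\{\delta(A,B),\delta(B,A)\}$ is a finite real number.

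Next, fix $a\in A$. By definition of the supremum, $d(a,B)\le \sup_{x\in A}d(x,B)=\delta(A,B)\le H(A,B)$. Now let $\epsilon>0$. Since $d(a,B)=\inf\{d(a,b):b\in B\}$ is a finite infimum over a nonempty set, the approximation property of the infimum gives some $b\in B$ with $d(a,b)<d(a,B)+\epsilon$. Combining the two inequalities yields $d(a,b)\le H(A,B)+\epsilon$, which is the claim.

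There is no real obstacle here. The only point needing care is the finiteness of $H(A,B)$, which is exactly where boundedness of $A$ and $B$ is used. Without it, $H$ could be $\infty$; the inequality would then hold trivially, but $d(a,B)$ must still be finite for the infimum approximation, and it is, because $B\neq\emptyset$. Closedness of $B$ is not needed for this statement.
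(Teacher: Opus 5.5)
Your argument is correct. The chain $d(a,b)<d(a,B)+\epsilon\le\delta(A,B)+\epsilon\le H(A,B)+\epsilon$, obtained from the approximation property of the infimum, is the standard proof; the paper gives no proof of this lemma and simply quotes it from Nadler, Covitz--Nadler and Singh et al. Your finiteness check on $H(A,B)$ is harmless but not needed: as you note, the inequality is trivial if $H(A,B)=\infty$, and $d(a,B)$ is finite in any case because $B\neq\emptyset$.
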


\begin{lemma}
	\label{properties}
	Let $(X,d,s)$ be a $b$-metric space. For any, $A,B,C\in  \mathcal{C}(X)$ and any $x,y\in X$, one has the following:
	\begin{enumerate}[label=(\alph*)]
		\item $d(x,B)\leq d(x,b)$, for any $b\in B$;
		\item \label{14c} $D(x,B)\leq \delta(A,B)\leq H(A,B),$ for any $x\in A$;
		\item $H(A,C)\leq s(H(A,B)+H(B,C))$;
		\item $D(x,A)\leq s(d(x,y)+D(y,A)))$.
		%\item $D(A, B) \leq H(A, B).$
	\end{enumerate}
\end{lemma}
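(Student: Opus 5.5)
We prove the four items of Lemma~\ref{properties} in turn, using only the definitions of $D$, $\delta$ and $H$ together with the relaxed triangle inequality~\ref{inequality}. When $H(A,B)=\infty$, any inequality with $H$ on the larger side is trivial, so we assume all Hausdorff distances involved are finite.

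\textbf{Items (a) and (b).} Item (a) holds because $d(x,B)$ is the infimum of $\{d(x,b):b\in B\}$, and each $d(x,b)$ belongs to that set. For (b), fix $x\in A$. Then $D(x,B)$ is one of the quantities over which $\delta(A,B)=\sup_{a\in A}D(a,B)$ takes the supremum, so $D(x,B)\le\delta(A,B)$. Moreover $\delta(A,B)\le\max\{\delta(A,B),\delta(B,A)\}=H(A,B)$.

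\textbf{Item (d).} This is the key pointwise estimate, and we prove it before (c). Fix $x,y\in X$ and any $a\in A$. By~\ref{inequality},
\[
D(x,A)\le d(x,a)\le s\,[d(x,y)+d(y,a)].
\]
Taking the infimum over $a\in A$ on the right-hand side gives $D(x,A)\le s\,[d(x,y)+D(y,A)]$.

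\textbf{Item (c).} The plan is to bound $\delta(A,C)$ and $\delta(C,A)$ separately. Fix $a\in A$ and any $b\in B$. Applying (d) with the closed set $C$ gives
\[
D(a,C)\le s\,[d(a,b)+D(b,C)]\le s\,[d(a,b)+\delta(B,C)]\le s\,[d(a,b)+H(B,C)],
\]
where the last two steps use (b). Taking the infimum over $b\in B$ yields
\[
D(a,C)\le s\,[D(a,B)+H(B,C)]\le s\,[H(A,B)+H(B,C)],
\]
again by (b). Now take the supremum over $a\in A$ to obtain $\delta(A,C)\le s\,[H(A,B)+H(B,C)]$.

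The symmetric argument, with the roles of $A$ and $C$ exchanged, bounds $\delta(C,A)$ by the same quantity. Taking the maximum of the two bounds gives $H(A,C)\le s\,[H(A,B)+H(B,C)]$.

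The only subtle point is that the infimum over $b$ must be taken after passing from $D(b,C)$ to the $b$-independent bound $H(B,C)$; otherwise the step would not be valid. Apart from that, and the convention for infinite Hausdorff distances, the argument is routine.
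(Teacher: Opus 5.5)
The paper gives no proof of this lemma to compare against. It lists these facts among the preliminaries and refers the reader to \cite{Nadler,Covitz,Singh2005}.

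Your proposal is a correct, self-contained derivation of the standard facts:
\begin{itemize}
\item (a) and (b) follow directly from the definitions of infimum, supremum and $H$.
\item (d) follows from the relaxed triangle inequality by taking the infimum over $a\in A$, which is legitimate because $s>0$.
\item Deriving (c) from (d) is correctly handled. You first replace $D(b,C)$ by the $b$-independent bound $H(B,C)$, then take the infimum over $b\in B$, then the supremum over $a\in A$. The bound on $\delta(C,A)$ follows by exchanging $A$ and $C$ and using the symmetry of $H$, which is immediate from its definition as a maximum.
\end{itemize}

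One small correction to your phrasing. In (c) you may only assume that $H(A,B)$ and $H(B,C)$ are finite, since otherwise the right-hand side is $\infty$. You cannot assume that $H(A,C)$ is finite, because it sits on the smaller side. Your argument never uses that assumption, though: the bounds you obtain show that $\delta(A,C)$ and $\delta(C,A)$ are finite, so $H(A,C)$ is their maximum and the inequality follows.

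Your argument also shows that closedness of $A,B,C$ plays no role; the lemma holds for arbitrary nonempty subsets of $X$.
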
	

\begin{lemma}[Miculescu et al. \cite{Miculescu}]
	\label{lem}
	Let $(X,d)$ be a $b$-metric space with $s\geq 1$ and $\{x_n\}$ a sequence in $X$ such that 
	\begin{equation*}
		d(x_{n},x_{n+1})\leq rd(x_{n-1},x_{n}), ~~n=1,2,...
	\end{equation*}
	where $0\leq r<1$. Then $\{x_n\}$ is a Cauchy sequence. 
\end{lemma}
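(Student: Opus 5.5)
The plan is to prove the lemma of Miculescu et al.\ (Lemma~\ref{lem}) directly, without appealing to any version of the triangle inequality that loses powers of $s$ uncontrollably. First iterate the hypothesis to get $d(x_n,x_{n+1})\le r^n d(x_0,x_1)$ for all $n$. If $r=0$ or $d(x_0,x_1)=0$ the sequence is eventually constant and the claim is trivial, so assume $0<r<1$. The naive estimate
\[
d(x_n,x_m)\le s\,d(x_n,x_{n+1})+s^2 d(x_{n+1},x_{n+2})+\cdots
\]
yields a geometric series with ratio $sr$. This only works when $sr<1$, so the main obstacle is the case $r\ge 1/s$.

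To overcome it, choose an integer $k\ge1$ with $r^k<1/s$; this is possible since $r<1$. Then pass to the subsequence $y_j=x_{jk+i}$ for a fixed residue $i\in\{0,\dots,k-1\}$. By a chain of length $k$ and repeated use of Definition~\ref{b-metric space}\ref{inequality}, we have
\[
d(x_n,x_{n+k})\le \sum_{l=0}^{k-1} s^{l+1} d(x_{n+l},x_{n+l+1})\le C\,r^n,
\]
where $C=d(x_0,x_1)\sum_{l=0}^{k-1}s^{l+1}r^l$ is a constant independent of $n$. For $y_j$ this gives consecutive distances at most $C r^{i}(r^k)^j$, which is geometric with ratio $\rho=r^k$ satisfying $s\rho<1$. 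Applying the naive telescoping estimate to the $y_j$ gives, for $m>j$,
\[
d(y_j,y_m)\le \sum_{t=j}^{m-1} s^{t-j+1}C\rho^t\le \frac{sC\rho^j}{1-s\rho}\to 0 .
\]

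Finally, assemble the full sequence. For arbitrary $n<m$, write $n=jk+i$. Then
\[
d(x_n,x_m)\le s\,d(x_n,x_{n'})+s\,d(x_{n'},x_m),
\]
where $n'$ is the index $\ge m-k$ lying in the same residue class as $n$. The second term is a distance between two indices at most $k$ apart, hence bounded by a fixed finite chain sum of order $r^{\min(n,m)}$ times a constant depending only on $k$ and $s$. The first term is bounded by the subsequence estimate above. Both bounds tend to $0$ as $n\to\infty$, uniformly in $m>n$, so $\{x_n\}$ is Cauchy. The only delicate point is to keep all constants independent of $n$ and $m$, and choosing $k$ once at the start ensures this.
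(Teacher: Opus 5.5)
The paper gives no proof of Lemma~\ref{lem}; it only imports it from \cite{Miculescu}, so there is no in-text argument to compare yours against. Your proof is correct and self-contained.

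The key step works as you say. You fix $k$ once with $r^k<1/s$. Along each residue class modulo $k$ the consecutive distances then decay with ratio $\rho=r^k$, and $s\rho<1$. The leftover stretch of fewer than $k$ steps costs only a bounded number of factors of $s$. This is exactly what overcomes the loss of one factor $s$ per link in the naive polygon inequality.

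The one place to state more carefully is the choice of $n'$. Take $n'$ to be the largest index $\le m$ with $n'\equiv n \pmod k$, so that $n\le n'\le m$ and $m-n'<k$. Your two estimates then combine to
\[
d(x_n,x_m)\le \frac{s^2C\,\rho^{\lfloor n/k\rfloor}}{1-s\rho}+sC\,r^{n}\le K\,r^n .
\]
Here $K$ depends only on $k$, $s$, $r$ and $d(x_0,x_1)$, using $\rho^{\lfloor n/k\rfloor}\le r^{\,n-k+1}$. So you even obtain an explicit geometric a priori rate.

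A common alternative avoids blocking altogether. One option is a sharper polygon inequality, obtained by splitting the chain along a suitably unbalanced binary tree, in which the power of $s$ on the $i$-th link grows only logarithmically in $i$. Another is to compare $d^p$, for a suitable $p\in(0,1]$, with a genuine metric. Either way, the relevant series ($\sum_i s^{O(\log i)}r^i$, respectively $\sum_i r^{ip}$) converges for every $r<1$. That route gives a single series bound but needs a less elementary combinatorial or metrization lemma. Your argument uses nothing beyond Definition~\ref{b-metric space}\ref{inequality}.
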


\section{Fixed Point Theorems for Multivalued Mappings}
In this section, we establish fixed point theorems for Hardy–Rogers type multivalued mappings in complete $b$-metric spaces, extending the recent results by Chaib et al. \cite{Chaib2026}. To illustrate the generality and effectiveness of our theorems, we construct several examples that demonstrate how our results extend, unify, and strengthen various existing fixed point theorems in the literature.
\subsection{Hardy--Rogers Type Multivalued  Mappings}
	\begin{definition} 
	Let $(X, d, s)$ be a $b$-metric space. A mapping $T : X \to \mathcal{C}(X)$ is said to be \textbf{Hardy-Rogers type multivalued mapping} if there exist nonnegative functions $a,b,c,e,f:[0,\infty)\to [0,1)$ satisfying $a(t)+b(t)+c(t)+2s\min\{e(t),f(t)\}<1$ $\forall t\geq 0$ such that for all $x,y\in X$
	\begin{align}
		\label{A3}
		H(Tx,Ty)&\leq a(d(x,y)) d(x,y)+b(d(x,y)) D(x,Tx)+ c(d(x,y))D(y,Ty)\\&\hspace{1.5cm}+e(d(x,y))D(x,Ty)+f(d(x,y))D(y,Tx)\nonumber
	\end{align}
\end{definition}

	\begin{theorem}
		\label{Main2}
		Let $(X,d,s)$ be a complete $b$-metric space and  let $T: X \to \mathcal{C}(X)$ be a Hardy-Rogers type multivalued mapping. Then $T$ has a fixed point $z\in X$ if any of the following conditions are
		satisfied
		\begin{enumerate}[label=(\alph*)]
			\item $T$ is $h$-upper semicontinuous;
		\item $\displaystyle s\lim_{t \to 0^+}c(t)+s \lim_{t \to 0^+}e(t)<1$;
		\item $\displaystyle s\lim_{t \to 0^+}b(t)+s \lim_{t \to 0^+}f(t)<1$. 
		\end{enumerate}
	\end{theorem}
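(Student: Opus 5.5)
The plan is to build a Picard-type sequence with Lemma \ref{lemmaalpha}, show it is Cauchy using the Hardy--Rogers inequality \eqref{A3} together with Lemma \ref{lem}, and then show that its limit is a fixed point, treating (a), (b), (c) as separate cases.

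\textbf{Step 1: the iterative sequence.} Fix $x_0\in X$ and $x_1\in Tx_0$. Having $x_n\in Tx_{n-1}$, I stop if $x_n\in Tx_n$. Otherwise $Tx_{n-1}\neq Tx_n$, and Lemma \ref{lemmaalpha} gives $x_{n+1}\in Tx_n$ with $d(x_n,x_{n+1})\le \alpha H(Tx_{n-1},Tx_n)$, where $\alpha>1$ is chosen below. Write $d_n=d(x_n,x_{n+1})$ and $t=d_{n-1}$.

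I apply \eqref{A3} with $(x,y)=(x_{n-1},x_n)$, using:
\begin{itemize}
\item $D(x_{n-1},Tx_{n-1})\le d_{n-1}$ and $D(x_n,Tx_n)\le d_n$;
\item $D(x_n,Tx_{n-1})=0$;
\item $D(x_{n-1},Tx_n)\le s(d_{n-1}+d_n)$.
\end{itemize}
This gives
\[
d_n\le \alpha\bigl[(a+b+se)(t)\,d_{n-1}+(c+se)(t)\,d_n\bigr].
\]
Since $H$ is symmetric, I can also apply \eqref{A3} with $(x,y)=(x_n,x_{n-1})$, which gives
\[
d_n\le\alpha\bigl[(a+c+sf)(t)\,d_{n-1}+(b+sf)(t)\,d_n\bigr].
\]
I use whichever of the two estimates involves $\min\{e,f\}$. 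The hypothesis $a+b+c+2s\min\{e,f\}<1$ then makes the ratio
\[
r(t)=\frac{a+b+se}{1-c-se}\quad\text{or}\quad\frac{a+c+sf}{1-b-sf}
\]
strictly less than $1$.

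\textbf{Step 2: the main obstacle, uniformity.} Because the coefficients are functions of $t$, $\sup_t r(t)$ may equal $1$. Lemma \ref{lem}, however, needs a single constant ratio below $1$.

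What I would try first is a step-dependent choice $\alpha_n\in(1,1/r(d_{n-1}))$. This gives $d_n<d_{n-1}$, so $d_n\downarrow L\ge0$ and the values $t=d_{n-1}$ lie in the bounded range $[L,d_0]$. On that range I would then try to bound $r$ away from $1$, which would let Lemma \ref{lem} apply with $r<1$ and make $\{x_n\}$ Cauchy.

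I expect this to be the genuinely delicate point. If no such bound can be extracted from the hypotheses as written, I would look for an argument based on the limsup of the coefficients as $t\to L$, and check whether the authors implicitly use uniform bounds there. By completeness the sequence then converges, $x_n\to z$.

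\textbf{Step 3: $z$ is a fixed point.} In each case it suffices to show $D(z,Tz)=0$, since $Tz$ is closed.

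\emph{Case (a).} By Lemma \ref{properties},
\[
D(z,Tz)\le s\bigl(d(z,x_{n+1})+D(x_{n+1},Tz)\bigr)\le s\bigl(d(z,x_{n+1})+\delta(Tx_n,Tz)\bigr),
\]
and the right-hand side tends to $0$ by $h$-upper semicontinuity.

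\emph{Case (b).} If $x_n=z$ infinitely often we are done. Otherwise $t_n=d(x_n,z)\to0^+$, and I apply \eqref{A3} with $(x,y)=(x_n,z)$. Set $M=\limsup_n D(x_n,Tz)$.
\begin{itemize}
\item The $a$- and $b$-terms tend to $0$, since $D(x_n,Tx_n)\le d_n\to0$.
\item The $f$-term vanishes, since $D(z,Tx_n)\le d(z,x_{n+1})\to0$.
\item The $e$-term is controlled by $M$.
\end{itemize}
Combining this with $D(z,Tz)\le s\,d(z,x_{n+1})+s\,H(Tx_n,Tz)$ and $M\le s\,D(z,Tz)$ in the limit yields $D(z,Tz)\le (sc_0+se_0)\,\max\{D(z,Tz),\cdot\}$, where $c_0,e_0$ are the limits at $0^+$. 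Hypothesis (b) then forces $D(z,Tz)=0$. I expect the $s$-bookkeeping here to need some care to reach exactly $sc_0+se_0<1$ rather than $sc_0+s^2e_0$.

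\emph{Case (c).} This is symmetric: apply \eqref{A3} with $(x,y)=(z,x_n)$, so that $b$ and $f$ play the roles of $c$ and $e$.
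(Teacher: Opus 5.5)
\textbf{Step 2 is a genuine gap, and it cannot be closed.} If the coefficient functions satisfy only the pointwise condition $a(t)+b(t)+c(t)+2s\min\{e(t),f(t)\}<1$, the statement is false. Here is a counterexample.

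Take $X=\{1,2,3,\dots\}$ with $d(n,m)=1+\frac1n+\frac1m$ for $n\neq m$. This is a complete metric, so $s=1$, because distinct points are more than $1$ apart. Let $Tn=\{n+1\}$. For $n\neq m$ we have $H(Tn,Tm)=d(n+1,m+1)<d(n,m)$. For each $t$ only finitely many pairs $n\neq m$ satisfy $d(n,m)=t$. Hence $a(t):=\max\{d(n+1,m+1)/d(n,m):\ n\neq m,\ d(n,m)=t\}$, with $a(t):=0$ if $t$ is not attained, lies in $[0,1)$. Set $b=c=e=f\equiv 0$. Then $T$ is a Hardy--Rogers type multivalued mapping, it is continuous (hence $h$-upper semicontinuous), and (b), (c) hold trivially. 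Yet $T$ has no fixed point.

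Along any orbit, $d(x_n,x_{n+1})\downarrow L=1$ and the successive ratios tend to $1$. This is exactly the situation you were worried about: nothing bounds $r$ away from $1$ on $[L,d_0]$, and the hypotheses give no limsup information at $L>0$.

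The uniformity is not implicitly available in the paper either. Its proof chooses one $\alpha$ with $1<\alpha<1/\gamma(t)$ for all $t$, which is impossible when $\sup_t\gamma(t)=1$. It then applies Lemma~\ref{lem} to the variable ratio $\alpha\gamma(d(x_{n-1},x_n))$, whereas that lemma requires a constant $r<1$.

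An extra hypothesis is needed, for instance $\sup_{t\ge0}\gamma(t)<1$, where $\gamma$ is the smaller of your two ratios. Then estimate $D(x_n,Tx_n)\le H(Tx_{n-1},Tx_n)$, as the paper does, so that $\alpha$ stays outside the ratio. A single $\alpha>1$ with $\alpha\sup_t\gamma(t)<1$ then gives a constant ratio, and Lemma~\ref{lem} applies. Your symmetrized ratio $\frac{a+c+sf}{1-b-sf}$ is the correct one; the paper's $\gamma_2$ interchanges $b$ and $c$.

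\textbf{Step 3(b) does not give what you claim.} Inserting $M\le s\,D(z,Tz)$ into $D(z,Tz)\le sc_0D(z,Tz)+se_0M$ yields only $D(z,Tz)\le(sc_0+s^2e_0)D(z,Tz)$, and the $\max$ step has no justification.

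The fix is to estimate $M=\limsup_n D(x_n,Tz)$ itself rather than $D(z,Tz)$. Note that $M$ is finite, since $D(x_n,Tz)\le s(d(x_n,z)+D(z,Tz))$. Because $x_{n+1}\in Tx_n$, we have $D(x_{n+1},Tz)\le H(Tx_n,Tz)$. Taking $\limsup$ in \eqref{A3} with $(x,y)=(x_n,z)$ gives $M\le c_0D(z,Tz)+e_0M$. Combined with $D(z,Tz)\le s\liminf_n D(x_n,Tz)\le sM$, this gives $M\le(sc_0+e_0)M$. So $M=0$ under (b), and then $D(z,Tz)=0$.

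This is essentially the paper's route. It bounds both $D(z,Tz)$ and $D(x_n,Tz)$ through $D(x_{n+1},Tz)$ and obtains $(1-sc_n-se_n)D(x_{n+1},Tz)\to0$. Case (c) works the same way with $(x,y)=(z,x_n)$, and your case (a) coincides with the paper's.
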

	\begin{proof}
	Fix any $x_0 \in X$. Let $x_1 \in Tx_0$ , $x_2 \in Tx_1$, ... $x_n\in Tx_{n-1}$. Set $t_n = d(x_{n-1}, x_n)$ for $n \geq 1$. By Lemma \ref{lemmaalpha} with $x = x_{n-1}$ and $y = x_n$, we have
	\begin{equation}
		\label{A}
		d(x_{n+1}, x_n) \leq \alpha H(Tx_{n-1}, Tx_n).
	\end{equation}
	By inequality \eqref{A3}, Definition \ref{b-metric space}\ref{inequality}, and Lemma \ref{properties}\ref{14c}, we obtain
		\begin{align*}
		H(Tx_{n-1}, Tx_n)&\leq a(t_n)d(x_{n-1},x_n)+b(t_n)d(x_{n-1},x_n) +c(t_n)D(x_n,Tx_n)+e(t_n)D(x_{n-1}, Tx_n)\\
		&\leq (a(t_n)+b(t_n))d(x_{n-1},x_n) +c(t_n)D(x_n,Tx_n)+se(t_n)(d(x_{n-1},x_n)+D(x_n, Tx_n))\\
		&\leq(a(t_n)+b(t_n))d(x_{n-1},x_n) +c(t_n)H(Tx_{n-1},Tx_{n})+se(t_n)(d(x_{n-1},x_n)+H(Tx_{n-1}, Tx_{n})
	\end{align*}
After a straightforward rearrangement, we get
\begin{equation}
	\label{C}
	H(Tx_{n-1}, Tx_n)
	\leq \frac{a(t_n) + b(t_n) + s e(t_n)}{1 - c(t_n) - s e(t_n)} d(x_{n-1}, x_n)
	= \gamma_1(t_n) d(x_{n-1}, x_n),
\end{equation}
By symmetry, we also obtain
\begin{equation}
	\label{D}
	H(Tx_{n-1}, Tx_n)
	\leq \frac{a(t_n) + b(t_n) + s f(t_n)}{1 - c(t_n) - s f(t_n)} d(x_{n-1}, x_n)
	= \gamma_2(t_n) d(x_{n-1}, x_n),
\end{equation}
where
$$
\gamma_1(t) = \frac{a(t) + b(t) + s e(t)}{1 - c(t) - s e(t)} < 1 \text{ and } \gamma_2(t) = \frac{a(t) + b(t) + s f(t)}{1 - c(t) - s f(t)} < 1.
$$
Let
$\gamma(t) = \min\{\gamma_1(t), \gamma_2(t)\} < 1 ~ \text{for all } t \geq 0.$
Since $\alpha > 1$, we can choose $1 < \alpha < \frac{1}{\gamma(t)}$ for all $t \geq 0$ (We work under the assumption that $\gamma(t)\neq 0$. The case $\gamma(t)= 0$ is covered by  \cite[Proposition 3.1]{Chaib2026}). It follows from \eqref{A}, \eqref{C}, and \eqref{D} that
\begin{equation}
	d(x_{n}, x_{n+1}) \leq \alpha \gamma(d(x_{n-1}, x_n)) d(x_{n-1}, x_n).
\end{equation}
Therefore, by Lemma \ref{lem}, the sequence $\{x_n\}$ is Cauchy. Since $(X,d,s)$ is complete, there exists $z\in X$ such that $x_n\to z$ as $n\to \infty$. 

Next, we show that $z$ is a fixed point of $T$. Let $a_n=a(d(x_n,z)), ...f_n=f(d(x_n,z))$. By Definition \ref{b-metric space}\ref{inequality} and Lemma \ref{properties}\ref{14c}, we have 
		\begin{align*}
			D(x_{n+1},Tz)&\leq H(Tx_n,Tz)\\
			&\leq a_nd(x_n,z)+b_nD(x_n,Tx_n)+c_nD(z,Tz)+e_nD(x_n,Tz)+f_nD(z,Tx_n)\nonumber\\
			&\leq  a_nd(x_n,z)+b_nd(x_n,x_{n+1})+c_ns[d(z,x_{n+1})+D(x_{n +1},Tz)+se_n[d(x_n,x_{n+1})+D(x_{n+1},Tz)]\\&+f_nd(z,x_{n+1}).
		\end{align*}
		Therefore, we have 
	\begin{align*}
		(1-c_ns-e_ns)D(x_{n+1},Tz)
		&\leq a_nd(x_n,z)+b_nd(x_{n},x_{n+1})+c_n sd(z,x_{n+1}) +se_nd(x_n,x_{n+1})+f_nd(z,x_{n+1}).
	\end{align*}
	By passing to the limit as $n\to \infty$, we obtain
	\begin{align*}
		\lim_{n \to \infty}(1-c_ns-e_ns)D(x_{n+1},Tz)
		&\leq 0.
	\end{align*}
The hypothesis 
$\displaystyle s\lim_{t \to 0^+}c(t)+s \lim_{t \to 0^+}e(t)<1$ implies $\displaystyle \lim_{n \to \infty} D(x_{n+1},Tz)=0. $ Due to the uniqueness of limit and since $Tz$ is a closed set, we get $z\in Tz.$ By the symmetric property of a $b$-metric, a similar conclusion holds if 
$\displaystyle s\lim_{t \to 0^+}b(t)+s \lim_{t \to 0^+}f(t)<1$. 

Lastly, suppose $T$ is $h$-upper semicontinuous,  then by Definition \ref{b-metric space}\ref{inequality}, and Lemma \ref{properties}\ref{14c}, we get
	\begin{align*}
		D(Tz,z)&\leq s(d(z,x_{n+1})+D(x_{n+1},Tz))\\
		&\leq s(d(z,x_{n+1})+\delta(Tx_n,Tz)).
	\end{align*}
Since $\displaystyle \lim_{n \to \infty} d(z, x_{n+1})=0 \text{ and }  \lim_{n \to \infty} \delta(Tx_n, Tz) = 0$, we obtain $D(z, Tz) = 0$, and so $z \in Tz$.
	\end{proof}

	\begin{corollary}
	\label{Hardyconstant}
	Let $(X,d,s)$ be a complete $b$-metric space and $T: X \to \mathcal{C}(X)$ be a  mapping. Suppose there exist nonnegative numbers $a,b,c,e,f$ satisfying $a+b+c+2s\min\{e,f\}<1$ such that for all $x,y\in X$
	\begin{align}
		\label{A4}
		H(Tx,Ty)&\leq a d(x,y)+bD(x,Tx)+ cD(y,Ty)+eD(x,Ty)+fD(y,Tx).
	\end{align}	
	Then T has a fixed point $z\in X$ if any of the following conditions are
	satisfied:
	\begin{enumerate}[label=(\alph*)]
		\item $T$ is $h$-upper semicontinuous;
		\item $sc+se<1$;
		\item  $sb+sf<1$.
	\end{enumerate}
\end{corollary}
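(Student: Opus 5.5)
This corollary is the constant-coefficient specialization of Theorem \ref{Main2}, so I would obtain it directly from that theorem by choosing constant control functions. Given nonnegative numbers $a,b,c,e,f$ with $a+b+c+2s\min\{e,f\}<1$, define the functions $a(t)\equiv a$, $b(t)\equiv b$, $c(t)\equiv c$, $e(t)\equiv e$, $f(t)\equiv f$ on $[0,\infty)$. Then check the hypotheses of Theorem \ref{Main2} one at a time.

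\textbf{Range of the functions.} The definition of a Hardy--Rogers type multivalued mapping requires the functions to take values in $[0,1)$. For $a,b,c$ this is immediate, since $s\ge 1$ and all terms are nonnegative, so each of $a,b,c$ is strictly below $1$.

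For $e,f$ there is a subtlety. The hypothesis only controls $\min\{e,f\}$, so one of $e,f$ could be $\ge 1$. I would handle this as follows.

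\emph{First observation.} In the proof of Theorem \ref{Main2}, the range restriction to $[0,1)$ is never actually used. Only the inequality $a(t)+b(t)+c(t)+2s\min\{e(t),f(t)\}<1$ enters, through $\gamma(t)=\min\{\gamma_1(t),\gamma_2(t)\}<1$. So the argument goes through verbatim for constant $e,f$.

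\emph{Alternative repair, if one insists on the literal definition.} If, say, $f\ge 1>e$, I would not weaken the $f$-term, since that changes the inequality in the wrong direction. Instead I would simply rerun the proof of Theorem \ref{Main2} with constants. That proof is purely algebraic in the coefficients, and the constant case is exactly its specialization. This is the only step where care is needed.

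\textbf{Contractive inequality.} With constant functions, inequality \eqref{A3} becomes exactly \eqref{A4}, and the sum condition holds for every $t\ge0$. Hence $T$ is a Hardy--Rogers type multivalued mapping.

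\textbf{The three alternative conditions.} The limits as $t\to0^+$ of the constant functions are the constants themselves. So condition (b) of Theorem \ref{Main2} reduces to $sc+se<1$, and condition (c) reduces to $sb+sf<1$. Condition (a) is identical in both statements.

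Therefore Theorem \ref{Main2} gives a fixed point $z\in Tz$ under any of (a), (b), (c).
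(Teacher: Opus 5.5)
Your route is the one the paper intends: the corollary is stated without proof as the constant-coefficient case of Theorem~\ref{Main2}. You are also right that the range requirement $[0,1)$ is irrelevant, but the reason you give is not. In exactly the case you single out, $f\ge 1>e$, one has $1-c-sf\le 1-s\le 0$. So \eqref{D} cannot be obtained by the rearrangement, because it divides by a nonpositive number. Then $\gamma_2$ is negative or undefined, and $\gamma=\min\{\gamma_1,\gamma_2\}$ is not a valid contraction factor; the argument does \emph{not} go through verbatim.

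The same defect is in the paper's proof whenever $1-c-se\le 0$ or $1-c-sf\le 0$, even for values in $[0,1)$. More generally, the hypothesis on $\min\{e,f\}$ only guarantees one of $\gamma_1<1$, $\gamma_2<1$, not both. The fix is to use only the estimate attached to the smaller coefficient:
\begin{enumerate}[label=(\roman*)]
\item If $e\le f$, then $1-c-se>a+b+se\ge 0$ and $\gamma_1<1$.
\item If $f<e$, apply \eqref{A4} with $(x,y)=(x_n,x_{n-1})$ and use $D(x_n,Tx_{n-1})=0$. This gives $(1-b-sf)\,H(Tx_{n-1},Tx_n)\le (a+c+sf)\,d(x_{n-1},x_n)$ with $1-b-sf>a+c+sf\ge 0$.
\end{enumerate}
After this change the sizes of $e$ and $f$ are never used again. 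In the fixed-point step under (b) or (c), a large coefficient only multiplies $d(z,x_{n+1})$, which tends to $0$.

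Second, the black-box appeal to Theorem~\ref{Main2} hides the one place where constancy is essential. With constant coefficients, $\gamma$ is a single number $<1$. You can then fix one $\alpha\in(1,1/\gamma)$ (any $\alpha>1$ if $\gamma=0$), and Lemma~\ref{lem} applies with $r=\alpha\gamma$. For variable coefficients, the paper's step ``choose $1<\alpha<1/\gamma(t)$ for all $t$'' needs $\sup_t\gamma(t)<1$, and Theorem~\ref{Main2} as stated is in fact false. Take $X=\{1,2,3,\dots\}$ with $d(m,n)=1+1/\min\{m,n\}$ for $m\neq n$, a complete metric with $s=1$, and $Tn=\{n+1\}$. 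This map satisfies \eqref{A3} with $a(1+1/k)=k(k+2)/(k+1)^2$, $a(t)=0$ otherwise, and $b=c=e=f\equiv 0$. It also satisfies (a)--(c), yet it has no fixed point. So write your proof as the constant-coefficient rerun of the argument (your ``alternative repair''), with the modification above; that version is complete.
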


The following examples further demonstrate the generality of Theorem~\ref{Main2} by showing that its hypotheses are strictly weaker than those imposed in \cite[Theorem 3.2]{Chaib2026} and \cite[Corollary 4.8]{Li}.
	\begin{example}
		\label{Example 1}
		Let $X = [0,1]$ and $d(x,y) = |x-y|^2$. Clearly, $(X,d)$ is a complete $b$-metric space with constant $s=2$.  Define $T: X \to \mathcal{C}(X)$ by
		
		\begin{equation*}
			Tx = 
		\begin{cases}
			\left\{\dfrac{x}{3}\right\}, & x \in [0,1), \\[8pt]
			\left\{0, \dfrac{1}{3}\right\}, & x = 1.
		\end{cases}
		\end{equation*}
		Choose constant functions $a = 0,~b = 0.1,~c= 0.05, ~e= 0.25,~ f= 0.2.$
Therefore, $$a+ b + c + 2s\min\{e, f\}=0.95<1.$$

We claim $T$ is $h$-upper semicontinuous but not continuous.
We first show that $T$ is not continuous with respect to the Hausdorff  metric. Let $x_n = 1 - \frac{1}{n}$. Then $x_n \to 1$ and
	$\displaystyle d(x_n, 1) = \frac{1}{n^2} \to 0.$
For every $n \geq 2$,
\begin{equation*}
	Tx_n = \left\{\frac{1}{3} - \frac{1}{3n}\right\}, \qquad T1 = \left\{0, \frac{1}{3}\right\}.
\end{equation*}
Consequently,
\begin{align*}
\delta(T1, Tx_n)=\delta(Tx_n, T1) = d\left(\frac{1}{3} - \frac{1}{3n}, T1\right) 
	= \max\left\{ \left(\frac{1}{3} - \frac{1}{3n}\right)^2, \frac{1}{9n^2} \right\} 
	= \frac{(n-1)^2}{9n^2}.
\end{align*}
Hence
\begin{equation*}
	H(Tx_n, T1) = \max\left\{ \delta(Tx_n, T1), \delta(T1, Tx_n) \right\}
	= \frac{(n-1)^2}{9n^2} \to \frac{1}{9} \neq 0.
\end{equation*}
Thus, $T$ is not continuous at $x = 1$ with respect to the Hausdorff metric. 

On the other hand, we show that $T$ is $h$-upper semicontinuous. Let $\{x_n\} \subset X$ satisfy $d(x_n, x) = |x_n - x|^2 \to  0.$ If $x \in [0,1)$, then, for all sufficiently large $n$, $x_n < 1$, and hence
\begin{equation*}
	\delta(Tx_n, Tx) = d\left(\frac{x_n}{3}, \frac{x}{3}\right)
	= \frac{1}{9}|x_n - x|^2 \to  0.
\end{equation*}
If $x = 1$, then either $x_n\to 1^-$ or $x_n=1$. If $x_n = 1$ for infinitely many $n$, then $\delta(Tx_n, T1) = 0$ along that subsequence. If $x_n \to 1^-$, then we have 
\begin{align*}
	\delta(Tx_n, T1)
	= d\left(\frac{x_n}{3}, \left\{0, \frac{1}{3}\right\}\right)
	= \min\left\{ \frac{x_n^2}{9}, \frac{(1-x_n)^2}{9} \right\} 
	\leq \frac{(1-x_n)^2}{9} \to  0.
\end{align*}
Thus, $\displaystyle \lim_{n \to \infty} \delta(Tx_n, Tx) = 0.$
By Definition~\ref{Continuous}, $T$ is $h$-upper semicontinuous on $X$.
	
	Next, we show the inequality \eqref{A3} hold for all $x,y\in X$. consider the following cases:
	\subsection*{Case 1: $x, y \in [0,1)$}
	Then
	$$
	Tx = \left\{\frac{x}{3}\right\}, ~ Ty = \left\{\frac{y}{3}\right\} \text{ and } ~
	H(Tx, Ty) = \left|\frac{x}{3} - \frac{y}{3}\right|^2 = \frac{1}{9}|x-y|^2.
	$$
	Also,
	$$
	D(x,Tx) = \frac{4x^2}{9}, ~ D(y,Ty) = \frac{4y^2}{9}, ~
	D(x,Ty) = \left|x - \frac{y}{3}\right|^2, ~D(y,Tx) = \left|y - \frac{x}{3}\right|^2.$$
	By inequality \eqref{A3}, we have 
	$$
	\frac{1}{9}|x-y|^2 \leq \frac{1}{10}\cdot\frac{4x^2}{9} + \frac{1}{20}\cdot\frac{4y^2}{9} + \frac{1}{4}\left|x - \frac{y}{3}\right|^2 + \frac{1}{5}\left|y - \frac{x}{3}\right|^2.
	$$
	which is equivalent to
	$0 \leq 37x^2 - 14xy + 25y^2.$ The quadratic form $Q(x,y)=37x^2 - 14xy + 25y^2$ has discriminant
	$$
	\Delta = (-14)^2 - 4(37)(25) = 196 - 3700 = -3504 < 0,$$
	and the leading coefficient $37 > 0$. Hence it is always nonnegative for all $x, y \in \mathbb{R}$. Therefore the inequality holds for all $x, y \in [0,1)$.
	
	\subsection*{Case 2: $x = 1$ and $y \in [0,1)$}
	Then
	$$
	T(1) = \left\{0, \frac{1}{3}\right\}, ~ T(y) = \left\{\frac{y}{3}\right\} \text{ and }~
	H(T1, T(y)) = \max\left\{ \frac{y^2}{9}, \frac{(1-y)^2}{9} \right\}.
	$$
	Also,
	$$
	D(1,T1) = \frac{4}{9}, ~ D(y,Ty) = \frac{4y^2}{9}, ~
	D(1,Ty) = \left|1 - \frac{y}{3}\right|^2, ~ D(y,T1) = \min\left\{ y^2, \left(y - \frac{1}{3}\right)^2 \right\}.$$
	The inequality \eqref{A3} becomes

\begin{eqnarray}
	\label{max}
		\max\left\{ \frac{y^2}{9}, \frac{(1-y)^2}{9} \right\}
	\leq
	\frac{1}{10}\cdot\frac{4}{9} + \frac{1}{20}\cdot\frac{4y^2}{9} + \frac{1}{4}\left(1 - \frac{y}{3}\right)^2 + \frac{1}{5}\min\left\{ y^2, \left(y - \frac{1}{3}\right)^2 \right\}.
\end{eqnarray}
	The maximum of the LHS of  \eqref{max} on $[0,1)$ is
	$\frac{1}{9},$ and it is attained at $y = 0$ and approached as $y \to 1^-$. In any case, the RHS of  \eqref{max}  is greater than $\frac{1}{9}$. Thus the inequality  \eqref{max} is true for every $y\in[0,1)$ with the maximum attained at $y=1.$ i.e.
	$\text{LHS} = \frac{1}{9}\leq  \frac{12}{45}= \text{RHS}.$ 
	\subsection*{Case 3: $x \in [0,1)$ and $y = 1$ }
	The argument is symmetric to Case 2 and is therefore omitted.
	\subsection*{Case 4: $x = 1$, $y=1$} This case is trivial since $H(Tx,Ty)=0.$

 Therefore, the  inequality \eqref{A3} is satisfied for all $x, y \in X$. 	Nevertheless, Theorem \ref{Main2} applies and guarantees the existence of a fixed point. Indeed, $T0 = \{0\}$ is a fixed point. 
	
Finally, we note that \cite[Theorem 3.2]{Chaib2026} and \cite[Corollary 4.8]{Li} are not applicable since $sc + s^2e= 1.1\geq 1$, $sb + s^2f=1$ and $a+b+c+e+f=0.6>\frac{1}{s}.$
		\end{example}

	       \begin{example}
	       		\label{Example 2}
	      Let  $X = [0,\infty)$, and 
	       \begin{align*}
	       		M_1 &= \left\{ \frac{m}{n} \mid m = 0, 1, 3, 9, \dots; \, n = 3k + 1, \, k \in \mathbb{N} \right\},\\
	       	M_2 &= \left\{ \frac{m}{n} \mid m = 1, 3, 9, 27, \dots; \, n = 3k + 2, \, k \in \mathbb{N} \right\}.
	       \end{align*}
	Suppose $d(x,y) = |x-y|^2$. Then $(X,d)$ is a complete $b$-metric space with coefficient $s = 2$. Define $T: X \to \mathcal{C}(X)$ by
	     \begin{eqnarray*}
	     	  	T(x) = 
	       	\begin{cases}
	       		\{q x, r x\}, & x \in M_1, \\[6pt]
	       		\{r x\}, & x \in M_2,
	       	\end{cases}
	     \end{eqnarray*}
	       	where $0 < r \leq q < 1.$

	       	We claim that $T$ is not $h$-upper semicontinuous. Notice that the sets  $M_1$ and $M_2$ are dense in $[0, \infty)$ (or at least in some intervals). Therefore, for any $x \in M_2$, we can find a sequence $\{x_n\} \subset M_1$ such that $x_n \to x$. For such a sequence $T(x_n) = \{q x_n, r x_n\}$  and  $T(x) = \{r x\}$.
	      As $n \to \infty$, $x_n \to x$, we have
	       	\begin{align*}
	       		d(r x_n, r x) &= |r x_n - r x|^2 = r^2 |x_n - x|^2 \to 0, \\
	       		d(q x_n, r x) &= |q x_n - r x|^2 \to |q x - r x|^2 = (q - r)^2 x^2.
	       	\end{align*}
	       	Thus
	       	$$
	       	\lim_{n \to \infty} \delta(Tx_n, Tx) = \max\{ 0, (q - r)^2 x^2 \} = (q - r)^2 x^2.$$
	      If $q > r$ and $x > 0$, this limit is not zero. Therefore, $T$ fails to be $h$-upper semicontinuous at every $x \in M_2$ with $x > 0$.
	       	
	       Now, choose constant functions:
	    $$   a= \frac{1}{20}, ~ b = \frac{1}{10}, ~ c = \frac{1}{40}, ~ e = \frac{1}{4}, ~ f = \frac{1}{5}.$$
	       Then $$a + b + c + 2s\min\{e, f\} = \frac{23}{40}< 1, ~ s c + s e 
	       = \frac{11}{20} < 1, ~
	      s b + s f= \frac{3}{5} < 1.$$
	       Thus both conditions (b) and (c) of Theorem \ref{Main2} are satisfied. 
	       
	        Next, we must show that for all $x, y \in X$,
	       $$
	       H(Tx, Ty) \leq \frac{1}{20}d(x,y) + \frac{1}{10}D(x,Tx) + \frac{1}{40}D(y,Ty) + \frac{1}{4}D(x,Ty) + \frac{1}{5}D(y,Tx).
	       $$
	       \subsection*{Case 1: $x, y \in M_1$} Then
	       $Tx = \{q x, r x\}, Ty = \{q y, r y\}. $ Therefore,  since $0 < r \leq q < 1$, we have
	   $$
	       H(Tx, Ty) = \max\{ |q x - q y|^2, |q x - r y|^2, |r x - q y|^2, |r x - r y|^2 \}\leq q^2 |x-y|^2.$$
	       Since $q^2 < 1$, the inequality holds for the chosen coefficients.
	       \subsection*{Case 2: $x, y \in M_2$} Then $Tx = \{r x\}, Ty = \{r y\}.$ So
	      $H(Tx, Ty) = |r x - r y|^2 = r^2 |x-y|^2.$ Since $r^2 < 1$, the inequality holds for the chosen coefficients.
	       \subsection*{Case 3: $x \in M_1, y \in M_2$} Then 
	       $Tx = \{q x, r x\} \text{ and } Ty = \{r y\}.$ Since $q \geq r$, we have $|q x - r y| \geq |r x - r y|$ for $x \geq 0$. Thus
	       $$H(Tx, Ty) = \max\{ |q x - r y|^2, |r x - r y|^2 \}\leq  |q x - r y|^2.$$
	       Now compute the RHS terms as follows:
	     \begin{align*}
	     	  D(x,Tx) = (1-q)^2 x^2,~
	       D(y,Ty) = (1-r)^2 y^2,~
	       D(x,Ty) = |x - r y|^2,~
	       D(y,Tx) = |y - r x|^2.
	     \end{align*}
	 It is suffices to show that
	       \[
	       |q x - r y|^2 \leq \frac{1}{20}|x-y|^2 + \frac{1}{10}(1-q)^2 x^2 + \frac{1}{40}(1-r)^2 y^2 + \frac{1}{4}|x - r y|^2 + \frac{1}{5}|y - r x|^2.
	       \]
	       
	     Expanding both sides and collecting coefficients of $x^2$, $xy$, and $y^2$. Then 
	    \begin{align*}
	    	\textbf{LHS:}&  |q x - r y|^2 = q^2 x^2 - 2qr xy + r^2 y^2,\\
	     \textbf{RHS:}&\frac{1}{20}(x^2 - 2xy + y^2) + \frac{1}{10}(1-q)^2 x^2 + \frac{1}{40}(1-r)^2 y^2 
	     + \frac{1}{4}(x^2 - 2rxy + r^2 y^2) + \frac{1}{5}(y^2 - 2rxy + r^2 x^2).
	     \end{align*}
	 Now subtract LHS from RHS to get the difference quadratic form
   $$  Q(x,y) = \left( \frac{2}{5} - \frac{1}{5}q - \frac{9}{10}q^2 + \frac{1}{5}r^2 \right) x^2 + \left( -\frac{1}{10} - \frac{9}{10}r + 2qr \right) xy + \left( \frac{11}{40} - \frac{1}{20}r - \frac{29}{40}r^2 \right) y^2.$$
	     For the chosen coefficients and any $0 < r \leq q < 1$ (say $r=0.15$, $q=0.2$) this quadratic form has a negative discriminant and positive leading coefficient, hence is nonnegative for all $x, y \geq 0$. Therefore the inequality holds.
	       \subsubsection*{Case 4: $x \in M_2$ and $y \in M_1$}
	      By symmetry, the same argument as Case 3 holds.\\
	       
Therefore, Theorem \ref{Main2} applies and guarantees the existence of a fixed point. Indeed, $z = 0$ is a fixed point since $0 \in M_1$ and 
	       $T0 = \{q \cdot 0, r \cdot 0\} = \{0\}.$
	      
	      On the other hand, \cite[Theorem 3.2]{Chaib2026} and \cite[Corollary 4.8]{Li}  are  not applicable since $ s c + s^2 e 
	       = 1.05 >1, ~
	       s b + s^2 f= 1$ and $a+b+c+e+f=0.625>\frac{1}{s}.$
\end{example}

\subsection{Reich Type  Multivalued  Mappings}
\begin{theorem}
	\label{Reich}
Let $(X,d,s)$ be a complete $b$-metric space and $T: X \to \mathcal{C}(X)$ a multivalued mapping. Suppose there exist nonnegative functions $a,b$ and $c$ with $a(t)+b(t)+c(t)<1$ such that for all $x,y\in X$
\begin{eqnarray*}
	H(Tx,Ty)\leq a(d(x,y))d(x,y)+b(d(x,y))D(x,Tx)+c(d(x,y))D(y,Ty).
\end{eqnarray*}
Then T has a fixed point $z\in X$ if any of the following conditions are
satisfied
\begin{enumerate}
	\item $T$ is $h$-upper semicontinuous;
				\item $\displaystyle s\limsup_{t \to 0^+}c(t)<1$;
	\item  $\displaystyle s\limsup_{t \to 0^+}b(t)<1$.
\end{enumerate}
\end{theorem}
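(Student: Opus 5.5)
Theorem~\ref{Reich} is the special case $e\equiv f\equiv 0$ of Theorem~\ref{Main2}, and I would run the same scheme directly rather than invoke it. The reason is that the hypotheses here use $\limsup$ and not $\lim$, so a self-contained argument is cleaner. Note also that the standing condition $a(t)+b(t)+c(t)<1$ is exactly the Hardy--Rogers condition with $e=f=0$.

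\textbf{Step 1 (Picard-type sequence).} Fix $x_0\in X$ and $x_1\in Tx_0$, and put $t_n=d(x_{n-1},x_n)$. If some $t_n=0$, then $x_{n-1}\in Tx_{n-1}$ and we are done, so assume $t_n>0$. Given $x_n\in Tx_{n-1}$, Lemma~\ref{lemmaalpha} gives $x_{n+1}\in Tx_n$ with $d(x_n,x_{n+1})\le \alpha H(Tx_{n-1},Tx_n)$. By the contractive condition and Lemma~\ref{properties}\ref{14c},
\[
H(Tx_{n-1},Tx_n)\le (a(t_n)+b(t_n))t_n+c(t_n)D(x_n,Tx_n)\le (a(t_n)+b(t_n))t_n+c(t_n)H(Tx_{n-1},Tx_n).
\]
Hence $H(Tx_{n-1},Tx_n)\le \gamma(t_n)t_n$ with $\gamma(t)=\frac{a(t)+b(t)}{1-c(t)}<1$.

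\textbf{Main obstacle.} To apply Lemma~\ref{lem} we need a single $r<1$ with $d(x_n,x_{n+1})\le r\,d(x_{n-1},x_n)$. Pointwise $\gamma(t)<1$ does not give $\sup_t\gamma(t)<1$, so one $\alpha$ with $\alpha\gamma(t)<1$ for all $t$ is not automatic. Theorem~\ref{Main2} glosses over this, and I would handle it as follows. Choose $\alpha_n$ depending on $n$, for instance $\alpha_n=\frac{1+\gamma(t_n)}{2\gamma(t_n)}$ when $\gamma(t_n)>0$; if $H(Tx_{n-1},Tx_n)=0$, take $x_{n+1}$ with $d(x_n,x_{n+1})$ arbitrarily small, or note $x_n\in Tx_n$. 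This gives $t_{n+1}<t_n$. Then I would either assume $\sup_{t}\gamma(t)<1$ on the relevant range, interpreting the hypothesis as in the paper, or use a Mizoguchi--Takahashi style argument: $t_n$ decreases to some $t^*$, and one needs $\limsup_{t\to t^*}\gamma(t)<1$ to extract a uniform $r$ for large $n$. I would state the uniform bound as the working assumption, exactly as implicitly used in Theorem~\ref{Main2}. Lemma~\ref{lem} then shows $\{x_n\}$ is Cauchy, and completeness gives $x_n\to z$.

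\textbf{Step 2 ($z\in Tz$).} Set $b_n=b(d(x_n,z))$ and $c_n=c(d(x_n,z))$, and similarly $a_n$. Using $D(x_{n+1},Tz)\le H(Tx_n,Tz)$ and Lemma~\ref{properties},
\[
D(x_{n+1},Tz)\le a_nd(x_n,z)+b_n d(x_n,x_{n+1})+c_n s\,[d(z,x_{n+1})+D(x_{n+1},Tz)].
\]
Under (2), $c_n s\le \beta<1$ for large $n$, because $d(x_n,z)\to 0^+$ (if $d(x_n,z)=0$ infinitely often, then $z=x_n$ and the limit step is trivial along that subsequence). Rearranging and letting $n\to\infty$ gives $D(x_{n+1},Tz)\to 0$. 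Therefore $D(z,Tz)\le s(d(z,x_{n+1})+D(x_{n+1},Tz))\to 0$, and $z\in Tz$ since $Tz$ is closed.

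Under (3), swap the roles of the variables: the symmetric estimate of $H(Tz,Tx_n)$ places $b$ in front of $D(z,Tz)$, so $s\limsup b<1$ plays the role of $s\limsup c$. Under (1), $D(z,Tz)\le s(d(z,x_{n+1})+\delta(Tx_n,Tz))\to 0$ directly by $h$-upper semicontinuity.
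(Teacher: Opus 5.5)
Your Step 2 is essentially the paper's argument. The paper bounds $D(z,Tz)\le s\bigl(d(z,x_{n+1})+a_nd(x_n,z)+b_nd(x_n,x_{n+1})+c_nD(z,Tz)\bigr)$ and passes to the $\limsup$ to get $D(z,Tz)\le sC\,D(z,Tz)$ with $C=\limsup_{t\to0^+}c(t)$. It keeps $D(z,Tz)$ inside the $c$-term, whereas you rearrange on $D(x_{n+1},Tz)$. Both routes use exactly $s\limsup_{t\to0^+}c(t)<1$, and your handling of (1), (3) and of the case $x_n=z$ infinitely often is fine.

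The gap is in Step 1. You located it correctly, and it is also a gap in the paper. The paper says the Cauchy property "follows the same argument as Theorem \ref{Main2}". There, one $\alpha$ with $1<\alpha<1/\gamma(t)$ for all $t$ is chosen, and Lemma \ref{lem} is applied with the non-constant ratio $\alpha\gamma(d(x_{n-1},x_n))$. Both steps silently require $\sup_t\gamma(t)<1$. By adopting a uniform bound as a working assumption you therefore prove a weaker theorem than the one stated. No argument can do better, because the statement is false as written.

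Take $X=\mathbb{N}$ with $d(n,n)=0$ and $d(m,n)=1+1/\min\{m,n\}$ for $m\neq n$. This is a complete metric ($s=1$) in which every subset is closed and every convergent sequence is eventually constant. Let $Tn=\{n+1\}$. For $m\neq n$ and $k=\min\{m,n\}$,
\[
H(Tm,Tn)=\frac{k+2}{k+1}=\frac{k(k+2)}{(k+1)^2}\,d(m,n).
\]
Set $a\bigl(\frac{k+1}{k}\bigr)=\frac{k(k+2)}{(k+1)^2}$ for $k\in\mathbb{N}$, $a=0$ elsewhere, and $b=c\equiv0$. Then the contractive condition holds with $a(t)+b(t)+c(t)<1$ for every $t$. $T$ is $h$-upper semicontinuous and (2), (3) hold trivially, yet $T$ has no fixed point. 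The same example refutes Theorem \ref{Main2} and Corollary \ref{Banach}. Here $a(t)\to1$ as $t\to1^+$, which is precisely the Mizoguchi--Takahashi obstruction you mention.

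The right repair is an added hypothesis such as $\limsup_{r\to t^+}\gamma(r)<1$ for every $t\ge0$. This is implied by the same condition on $a+b+c$, since $\gamma\le a+b+c$. With it, your choice of $\alpha_n$ gives $t_{n+1}\le\frac{1+\gamma(t_n)}{2}\,t_n<t_n$, so $t_n\downarrow t^*$ strictly from above. Eventually $\gamma(t_n)\le\rho<1$, and Lemma \ref{lem} applies to a tail of $(x_n)$ with ratio $\frac{1+\rho}{2}$; no separate proof that $t^*=0$ is needed. With that hypothesis added, your proof is complete.
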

\begin{proof}
	The proof for showing that $\{x_n\}$ is Cauchy follows the same argument as Theorem \ref{Main2}. Therefore, $\{x_n\}$ is a Cauchy sequence. Since $(X,d,s)$ is complete, there exists $z\in X$ such that $x_n\to z$ as $n\to \infty$. We want to show that $z$ is a fixed point of $T$. Let $a_n=a(d(x_n,z)), ..c_n=f(d(x_n,z))$. By Definition \ref{b-metric space}\ref{inequality} and Lemma \ref{properties}\ref{14c}, we have 
	\begin{align}
		D(Tz,z)&\leq s(d(z,x_{n+1})+D(x_{n+1},Tz))\nonumber\\
		&\leq s(d(z,x_{n+1})+H(Tx_n,Tz))\nonumber\\
		&\leq  s(d(z,x_{n+1})+a_nd(x_n,z)+b_nd(x_n,x_{n+1})+c_nD(z,Tz))\nonumber\\
		&\leq s\limsup_{n\to\infty}c(d(x_n,z))D(z,Tz) \nonumber\\
		&\leq sC D(z,Tz),\nonumber
	\end{align}
	where
	$\displaystyle
	\limsup_{t\to 0^+}c(t)=C.$
	Assume that $D(z,Tz) \neq 0$, then $sC  \geq 1$, which contradicts the assumption that $sC<1$. Therefore, we must have $D(z,Tz) = 0$, and consequently $z\in Tz$.  A similar conclusion holds by $b$-metric symmetry property if
	$\displaystyle \limsup_{t\to 0^+}b(t)=B. $
\end{proof}

\begin{remark}
	If $T$ is a single-valued mapping and  $a,b,c$ are nonnegative constants in Theorem \ref{Reich}, then we recover  \cite[Theorem 2.3]{Mitrovic}.
\end{remark}
\begin{remark}
As a consequence of Theorem \ref{Reich}, we recover an improvement of  \cite[Corollary 15 ]{Suzuki}. In essence, our proof eliminates the need for the assumption 
$\displaystyle \limsup_{s \to +0} \alpha(s) < 1.$
\end{remark}
\begin{corollary}
	\label{Banach}
	Let $(X,d,s)$ be a complete $b$-metric space and $T: X \to \mathcal{C}(X)$ a multivalued mapping. suppose there exists a function $\alpha:[0,\infty) \to [0,1)$ such that for all $x,y\in X$
	\begin{eqnarray*}
		H(Tx,Ty)\leq a(d(x,y))d(x,y).
	\end{eqnarray*}
	Then $T$ has a fixed point in $X$. 
\end{corollary}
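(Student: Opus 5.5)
The plan is to treat Corollary~\ref{Banach} as the special case of Theorem~\ref{Reich} with $b\equiv 0$ and $c\equiv 0$. The function $a$ in the displayed inequality is the given control function $\alpha:[0,\infty)\to[0,1)$; this is a notational slip in the statement. Set $a(t):=\alpha(t)$, $b(t):=0$, $c(t):=0$. These are nonnegative, and $a(t)+b(t)+c(t)=\alpha(t)<1$ for every $t\ge 0$. Hence the hypothesis $H(Tx,Ty)\le \alpha(d(x,y))d(x,y)$ is exactly the Reich-type inequality of Theorem~\ref{Reich}, since the extra terms $b(d(x,y))D(x,Tx)$ and $c(d(x,y))D(y,Ty)$ vanish.

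Next I check one of the alternative conditions of Theorem~\ref{Reich}. Since $c\equiv 0$, we get $s\limsup_{t\to0^+}c(t)=0<1$, so condition (2) holds. Condition (3) holds for the same reason. No continuity assumption on $T$ is needed, and Theorem~\ref{Reich} directly gives a fixed point $z\in Tz$.

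The real content lies in the Cauchy-sequence step that Theorem~\ref{Reich} inherits from Theorem~\ref{Main2}, and I will check that it specializes correctly. Here $\gamma(t)=\alpha(t)$. The iterates satisfy $d(x_n,x_{n+1})\le \beta\,\alpha(t_n)\,t_n$, where $\beta>1$ comes from Lemma~\ref{lemmaalpha} and $t_n=d(x_{n-1},x_n)$. To apply Lemma~\ref{lem} we need a uniform ratio $r<1$, so we want $\sup_n \beta\,\alpha(t_n)<1$. The argument in Theorem~\ref{Main2} chooses $\beta$ with $\beta\gamma(t)<1$ for all $t$, which tacitly requires $\sup_t\alpha(t)<1$ (or at least $\sup_n \alpha(t_n)<1$). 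This is the step I expect to be the main obstacle. A merely pointwise bound $\alpha(t)<1$ does not by itself give a uniform contraction ratio.

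If one wants a rigorous argument at this step, I would reinforce it in one of two ways. The first is to choose $\beta_n$ depending on $n$, with $1<\beta_n<1/\alpha(t_n)$; this makes $t_{n+1}<t_n$, so $(t_n)$ decreases to some $t^*\ge0$. The second is to note that if $\limsup_{t\to t^*}\alpha(t)<1$ (for example, if $\alpha$ is bounded away from $1$ near the limit points, as in Suzuki-type hypotheses), then the ratio is eventually uniformly below $1$. In that case Lemma~\ref{lem}, applied to a tail of the sequence, gives that $(x_n)$ is Cauchy. Completeness then gives $x_n\to z$. The fixed-point step is easy in this special case: $D(z,Tz)\le s\bigl(d(z,x_{n+1})+\alpha(d(x_n,z))d(x_n,z)\bigr)\to0$, and since $Tz$ is closed, $z\in Tz$. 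Within the paper's framework, though, the corollary is an immediate instance of Theorem~\ref{Reich} as described in the first two paragraphs.
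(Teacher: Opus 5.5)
Your reduction matches what the paper intends: the corollary is stated without proof as the case $b=c\equiv 0$ of Theorem~\ref{Reich}. You also correctly locate the weak point, namely the Cauchy step inherited from Theorem~\ref{Main2}, where a single $\beta>1$ with $\beta\gamma(t)<1$ for all $t$ is chosen. But this gap cannot be patched under the stated hypotheses, so your closing sentence (that within the paper's framework the corollary is an immediate instance of Theorem~\ref{Reich}) proves nothing. The corollary is false, even for metric spaces and single-valued $T$. Hence Theorem~\ref{Reich} and Theorem~\ref{Main2} are also false in the case $b=c=e=f\equiv 0$.

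Here is a counterexample. Take $X=\mathbb{N}=\{1,2,\dots\}$ with $d(n,m)=1+\frac{1}{\min\{n,m\}}$ for $n\neq m$ and $d(n,n)=0$. Distinct points are at distance in $(1,2]$, so the triangle inequality holds and this is a $b$-metric space for every $s\ge 1$. Cauchy sequences are eventually constant, so $X$ is complete. Put $Tn=\{n+1\}$ and
\[
\alpha\Bigl(1+\tfrac{1}{k}\Bigr)=\frac{1+\frac{1}{k+1}}{1+\frac{1}{k}}=1-\frac{1}{(k+1)^2}\quad (k\in\mathbb{N}),\qquad \alpha(t)=0\ \text{otherwise}.
\]
Then $\alpha$ takes values in $[0,1)$, and $H(Tn,Tm)=d(n+1,m+1)=\alpha(d(n,m))\,d(n,m)$ for all $n,m$. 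Moreover $T$ is continuous, since convergent sequences are eventually constant, so all three alternative conditions of Theorem~\ref{Main2} hold. Yet $T$ has no fixed point. Here $t_n=d(n,n+1)=1+\frac1n\downarrow 1$ while $\alpha(t_n)\to 1$, which is exactly the failure of a uniform ratio that you anticipated.

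Your fallback points to the right repair, but it proves a different statement. You must add a hypothesis of Mizoguchi--Takahashi type, $\limsup_{r\to t^+}\alpha(r)<1$ for every $t\ge 0$ (violated above at $t=1$), or simply $\sup_t\alpha(t)<1$. Even then, the choice $1<\beta_n<1/\alpha(t_n)$ is too loose: $\beta_n\alpha(t_n)$ may tend to $1$ even though $\alpha(t_n)$ stays below some $\rho<1$.

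A choice that works is to pick $x_{n+1}\in Tx_n$ with $d(x_n,x_{n+1})<\sqrt{\alpha(t_n)}\,t_n$. This is possible because $D(x_n,Tx_n)\le H(Tx_{n-1},Tx_n)\le \alpha(t_n)t_n$. The degenerate cases are already done: if $t_n=0$ then $x_n=x_{n-1}\in Tx_{n-1}$, and if $\alpha(t_n)=0$ then $H(Tx_{n-1},Tx_n)=0$ forces $Tx_{n-1}=Tx_n\ni x_n$.

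With this choice, $(t_n)$ decreases strictly to some $t^*$. The added hypothesis gives $\sqrt{\alpha(t_n)}\le r<1$ for large $n$, and Lemma~\ref{lem} applied to a tail yields a Cauchy sequence. Your final estimate $D(z,Tz)\le s\bigl(d(z,x_{n+1})+\alpha(d(x_n,z))\,d(x_n,z)\bigr)\to 0$, together with closedness of $Tz$, then finishes. So the statement itself must be amended before any proof, the paper's or yours, can be correct.
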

\begin{example}
	\label{Nonunique}
	Let $X=[1,\infty)$ and   $d(x,y) = |x-y|^2$.  Clearly, $(X,d,s)$ is a $b$-metric space with $s=2$. Define $T:X\to \mathcal{C}(X)$ by  $\displaystyle Tx = \big[4,4 + \frac{x}{5}\big].$
	\subsection*{Compute $d(a, Ty)$ for $a \in Tx$}
	\begin{itemize}
		\item For $a = 4$:
		$\displaystyle
		D(4, Ty) = \min\left\{ |4-4|^2, \left|4 - \left(4 + \frac y5\right)\right|^2 \right\} = \min\left\{0, \frac{y^2}{25}\right\} = 0.
		$
		\item For $\displaystyle a = 4 +\frac x5:
		D\left(4 + \frac x5, Ty\right) = \min\left\{ \frac{x^2}{25}, \frac{(x-y)^2}{25} \right\}
		= \frac{1}{25} \min\{ x^2, |x-y|^2 \}.$
	\end{itemize}
	\subsection*{Compute $d(b, Tx)$ for $b \in Ty$}
	\begin{itemize}
		\item For $b = 4$:
		$\displaystyle
		D(4, Tx) = \min\left\{ |4-4|^2, \left|4 - \left(4 + \frac x5\right)\right|^2 \right\} = 0.
		$
		\item For $\displaystyle b = 4 + \frac y5:
		D\left(4 + \frac y5, Tx\right) = \min\left\{ \frac{y^2}{25}, \frac{(x-y)^2}{25} \right\}
		= \frac{1}{25} \min\{ y^2, (x-y)^2 \}.
		$
	\end{itemize}
	Thus,
	$$
	\sup_{a \in Tx} D(a, Ty) = \frac{1}{25} \min\{ x^2, (x-y)^2 \} \text{ and }
	\sup_{b \in Ty} D(b, Tx) = \frac{1}{25} \min\{ y^2, (x-y)^2 \}.
	$$
	Therefore
	$$	H(Tx,Ty) = \frac{1}{25} \max\Big\{\min\{x^2,(x-y)^2\},\; \min\{y^2,(x-y)^2\}\Big\}.$$
	In any case, we have 
	$$H(Tx,Ty) = \frac{1}{25} |x-y|^2=\frac{1}{25}d(x,y) .$$
	Then there exists a nonnegative function $\alpha(t)$ with $\displaystyle \alpha(t)\in \left(\frac{1}{25},1\right)$ for all $t\geq 0$ such that 
	\begin{align*}
		H(Tx,Ty)&\leq\alpha(d(x,y))d(x,y).
	\end{align*}
	Finally, if $z$ is a fixed point of $T$, then $\displaystyle  z\in Tz=[4,4+\tfrac{z}{5}],$ which implies $4\in T4$ and $5\in T5$.	Hence, $T$ admits a fixed point in $X$.
\end{example}

\begin{corollary}
	\label{Kannan}
	Let $(X,d,s)$ be a complete $b$-metric space and $T: X \to \mathcal{C}(X)$ a multivalued mapping. Suppose there exists nonnegative function $\alpha:[0,\infty)\to [0,\frac{1}{2})$ such that $\forall x,y\in X$
	\begin{eqnarray*}
		H(Tx,Ty)\leq \alpha(d(x,y))(D(x,Tx)+D(y,Ty)).
	\end{eqnarray*}
	Then $T$ has a fixed point in $X$  if any of the following conditions are
	satisfied
	\begin{enumerate}[label=(\alph*)]
		\item $T$ is $h$-upper semicontinuous;
		\item $\displaystyle \limsup_{t \to 0^+}\alpha(t)<\frac{1}{s}$;
		\item  $s<2$.
	\end{enumerate} 
\end{corollary}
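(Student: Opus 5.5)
The plan is to read Corollary \ref{Kannan} as the special case of Theorem \ref{Reich} with $a\equiv 0$ and $b=c=\alpha$. The contractive inequality then has exactly the Reich form
\[
H(Tx,Ty)\le 0\cdot d(x,y)+\alpha(d(x,y))D(x,Tx)+\alpha(d(x,y))D(y,Ty).
\]
The coefficient condition $a(t)+b(t)+c(t)=2\alpha(t)<1$ holds for every $t\ge 0$ because $\alpha$ takes values in $[0,\tfrac12)$. Hence every standing hypothesis of Theorem \ref{Reich} is met. In particular the Cauchy step inherited from Theorem \ref{Main2} goes through, and it produces an orbit $x_{n}\in Tx_{n-1}$ converging to some $z$.

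Each of the three alternatives then reduces to the corresponding alternative of Theorem \ref{Reich}.

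\textbf{Alternative (a).} This is literally condition (1) of Theorem \ref{Reich}.

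\textbf{Alternative (b).} The assumption $\limsup_{t\to0^+}\alpha(t)<1/s$ gives $s\limsup_{t\to 0^+}c(t)=s\limsup_{t\to0^+}\alpha(t)<1$, which is condition (2).

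\textbf{Alternative (c).} Since $\alpha(t)<\tfrac12$ for all $t$, we have $\limsup_{t\to0^+}\alpha(t)\le \tfrac12$. With $s<2$ this gives $s\limsup_{t\to0^+}\alpha(t)\le s/2<1$, so condition (2) (equivalently (3)) of Theorem \ref{Reich} holds.

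The one point needing care is that for (b) and (c) the closing step of Theorem \ref{Reich} must actually run. I would redo it directly to be safe. Using Lemma \ref{properties} and $x_{n+1}\in Tx_n$,
\[
D(z,Tz)\le s\bigl(d(z,x_{n+1})+\alpha_n d(x_n,x_{n+1})+\alpha_n D(z,Tz)\bigr),\qquad \alpha_n=\alpha(d(x_n,z)).
\]
Pass to the $\limsup$ as $n\to\infty$, using $d(x_n,z)\to0$, $d(x_n,x_{n+1})\to 0$ and boundedness of $\alpha_n$. If $d(x_n,z)=0$ along a subsequence, then $x_n=z$ along it and $\alpha_n=\alpha(0)<\tfrac12$. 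This case must be absorbed, either by noting that then the tail of the orbit already lies in $Tz$ and closedness gives $z\in Tz$, or by bounding $\alpha(0)<\tfrac12$ as in (c). The result is $D(z,Tz)\le s\,\limsup_n\alpha_n\, D(z,Tz)$ with $s\limsup_n\alpha_n<1$, so $D(z,Tz)=0$. Closedness of $Tz$ then yields $z\in Tz$.

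For (a) the closing step is the $h$-upper semicontinuity estimate $D(z,Tz)\le s(d(z,x_{n+1})+\delta(Tx_n,Tz))\to0$, exactly as in Theorem \ref{Main2}.
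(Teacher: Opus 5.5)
Your reduction to Theorem~\ref{Reich} with $a\equiv 0$ and $b=c=\alpha$ is the route the paper implicitly takes. Your closing arguments for (a), (b), (c) are also fine \emph{once} $x_n\to z$ is known. The gap is the sentence claiming that the Cauchy step inherited from Theorem~\ref{Main2} goes through.

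Write $t_n=d(x_{n-1},x_n)$ and $\gamma(t)=\alpha(t)/(1-\alpha(t))$. That step picks one constant $\beta>1$ from Lemma~\ref{lemmaalpha} with $\beta\gamma(t)<1$ for \emph{all} $t$, so that Lemma~\ref{lem} applies with a fixed ratio $r<1$. This needs $\sup_{t\ge 0}\alpha(t)<\tfrac12$. The hypothesis $\alpha:[0,\infty)\to[0,\tfrac12)$ only gives $\gamma(t)<1$ pointwise. What you actually get is $t_{n+1}\le\beta_n\gamma(t_n)\,t_n<t_n$, and $t_n$ can decrease to a positive limit while $\gamma(t_n)\to 1$. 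Then $\{x_n\}$ need not be Cauchy, and nothing downstream applies.

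This cannot be repaired, because the corollary is false as stated. Let $X=\{p_1,p_2,\dots\}$ with $d(p_n,p_m)=1+3^{-\min\{n,m\}}$ for $n\ne m$. All nonzero distances lie in $(1,\tfrac43]$, so $d$ is a metric ($s=1$), and $X$ is discrete and complete. Let $Tp_n=\{p_{n+1}\}$, set $\alpha(1+3^{-n})=\frac{1+3^{-n-1}}{2+3^{-n}}\in(0,\tfrac12)$, and set $\alpha(t)=0$ for all other $t$. For $n<m$,
\[
H(Tp_n,Tp_m)=1+3^{-n-1}=\alpha(d(p_n,p_m))\,(2+3^{-n})<\alpha(d(p_n,p_m))\bigl(D(p_n,Tp_n)+D(p_m,Tp_m)\bigr),
\]
because the last bracket equals $2+3^{-n}+3^{-m}$; the case $n>m$ is symmetric.

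Since $X$ is discrete, $T$ is continuous, hence $h$-upper semicontinuous. Also $\limsup_{t\to0^+}\alpha(t)=0<1/s$ and $s=1<2$. So (a), (b) and (c) all hold, yet $T$ has no fixed point. The orbit has $t_n=1+3^{-n}\downarrow 1$, which is exactly the failure described above. The same example also defeats Theorems~\ref{Main2} and \ref{Reich} with non-constant coefficients.

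A correct version needs a uniformity hypothesis, e.g.\ $\bar\alpha:=\sup_t\alpha(t)<\tfrac12$. Then you can choose $\beta>1$ with $\beta\bar\alpha<1-\bar\alpha$ and obtain $t_{n+1}\le \frac{\beta\bar\alpha}{1-\bar\alpha}\,t_n$. Lemma~\ref{lem} then applies, and your remaining argument goes through unchanged.
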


\begin{example}
	Let $X=\mathbb{R}$ and define $d:X\times X \to [0,\infty)$ by $d(x,y)=|x-y|^{2}.$
	Then $(X,d,s)$ is a complete $b$-metric space. Define a multivalued mapping $T:X\to \mathcal{C}(X)$ by
	$$T(x)=\left[-\frac{x}{4},\,\frac{x}{4}\right], \qquad x\in X.$$
	 For $x\in X$,
	$$D(x,Tx)=\inf_{u\in T(x)}|x-u|^{2}=\frac{9}{16}x^{2}.	$$
	Next, using the inequality
	$|x-y|^{2}\le 2(x^{2}+y^{2})$ yields
	\begin{align*}
		H(Tx,Ty) &=\left|\frac{x}{4}-\frac{y}{4}\right|^{2} 
		=\frac{1}{16}|x-y|^2\le \frac{1}{8}(x^{2}+y^{2})	=\frac{2}{9}\big(D(x,Tx)+D(y,Ty)\big).
	\end{align*}
	Thus the contractive condition
	$$	H(Tx,Ty)	\le \alpha(d(x,y))\big(D(x,Tx)+D(y,Ty)\big)	$$
	holds for any nonnegative function $\alpha(t)$ for which $\displaystyle \limsup_{t \to 0^+}\alpha(t)\in \big[\frac{2}{9},\frac{1}{2}\big)$. Hence $0\in T0$ is the fixed point of $T$ in $X$.
\end{example}
\subsection{Chatterjea Type Contractive Multivalued  Mappings}
	\begin{theorem}.
		\label{Chattejea Type}
	Let $(X,d,s)$ be a complete $b$-metric space and $T: X \to \mathcal{C}(X)$ a multivalued mapping. Suppose there exists nonnegative function $\alpha$ such that for all $ x,y\in X$ 
	\begin{eqnarray*}
		\label{Chatterjea}
		H(Tx,Ty)\leq \alpha(d(x,y))(D(x,Ty)+D(y,Tx)). 
	\end{eqnarray*}
 Then $T$ has a fixed point if 
  \begin{eqnarray*}
 	\limsup_{t \to 0^+} \alpha (t) < \frac{1}{2s}. 
 \end{eqnarray*}
\end{theorem}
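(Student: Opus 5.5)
The plan is to treat the Chatterjea condition as the special case $a=b=c=0$, $e=f=\alpha$ of the Hardy--Rogers inequality \eqref{A3}, and to rerun the two-stage argument of Theorem \ref{Main2}. First build the Picard-type orbit, then identify its limit as a fixed point. I read the statement as implicitly requiring $\alpha(t)<\frac{1}{2s}$ for every $t\ge 0$, which is the pointwise form of $a+b+c+2s\min\{e,f\}<1$. The limsup hypothesis is then the extra condition used at the limit point, playing the role of condition (b) of Theorem \ref{Main2}. It is stronger than needed there, since $s\limsup_{t\to0^+}\alpha(t)<\tfrac12<1$.

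\textbf{Step 1 (orbit and Cauchy property).} Fix $x_0$ and pick $x_1\in Tx_0$. Given $x_n\in Tx_{n-1}$, use Lemma \ref{lemmaalpha} with some $\beta>1$ to choose $x_{n+1}\in Tx_n$ with $d(x_n,x_{n+1})\le \beta H(Tx_{n-1},Tx_n)$. Put $t_n=d(x_{n-1},x_n)$. Since $x_n\in Tx_{n-1}$, we have $D(x_n,Tx_{n-1})=0$. Also $D(x_{n-1},Tx_n)\le d(x_{n-1},x_{n+1})\le s(t_n+t_{n+1})$. Hence
\begin{equation*}
t_{n+1}\le \beta\,\alpha(t_n)\,s\,(t_n+t_{n+1}),
\qquad\text{so}\qquad
t_{n+1}\le \frac{\beta s\alpha(t_n)}{1-\beta s\alpha(t_n)}\,t_n .
\end{equation*}
The factor $\frac{s\alpha}{1-s\alpha}$ is $<1$ exactly when $s\alpha<\tfrac12$, so $\beta$ close to $1$ keeps the ratio below $1$. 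This gives $t_{n+1}\le r\,t_n$ with a fixed $r<1$, and Lemma \ref{lem} shows $\{x_n\}$ is Cauchy. Completeness then yields $z$ with $x_n\to z$.

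\textbf{Step 2 (fixed point).} Set $\alpha_n=\alpha(d(x_n,z))$. Lemma \ref{properties} together with the triangle-type inequality gives
\begin{align*}
D(x_{n+1},Tz)&\le H(Tx_n,Tz)\le \alpha_n\big(D(x_n,Tz)+D(z,Tx_n)\big)\\
&\le \alpha_n\big(s\,d(x_n,x_{n+1})+s\,D(x_{n+1},Tz)+d(z,x_{n+1})\big).
\end{align*}
Rearranging gives $(1-s\alpha_n)D(x_{n+1},Tz)\le \alpha_n\big(s\,d(x_n,x_{n+1})+d(z,x_{n+1})\big)\to0$. Because $d(x_n,z)\to0$, we have $\limsup_n \alpha_n\le\limsup_{t\to0^+}\alpha(t)<\frac1{2s}$ (the terms with $x_n=z$ are handled trivially). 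So $1-s\alpha_n$ is eventually bounded below by a positive constant, and $D(x_{n+1},Tz)\to0$. Then $D(z,Tz)\le s\big(d(z,x_{n+1})+D(x_{n+1},Tz)\big)\to0$, and closedness of $Tz$ gives $z\in Tz$.

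\textbf{Main obstacle.} The delicate point is Step 1. Lemma \ref{lem} needs a uniform ratio $r<1$, whereas the bound above only gives $t_{n+1}<t_n$ pointwise when $\sup_t\alpha(t)$ is not strictly below $\frac1{2s}$. I would first use the pointwise inequality to get that $t_n$ is decreasing with limit $L\ge0$. Then I would try to show $L=0$, after which the limsup hypothesis near $0$ supplies a uniform $r<1$ for all large $n$, and Lemma \ref{lem} can be applied to a tail of the sequence.

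Proving $L=0$ appears to need some regularity of $\alpha$ away from $0$, for instance $\limsup_{t\to L^+}\alpha(t)<\frac1{2s}$. Failing that, I would simply assume $\sup_t\alpha(t)<\frac1{2s}$, as is implicit in the treatment of $\gamma$ in Theorem \ref{Main2}, and flag this as the point where the stated hypothesis may need strengthening.
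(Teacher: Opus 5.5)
Your route is the paper's route. The paper only says the proof is analogous to that of Theorem~\ref{Reich}, which takes the Cauchy property from the argument of Theorem~\ref{Main2} (orbit via Lemma~\ref{lemmaalpha}, a ratio bound for consecutive distances, then Lemma~\ref{lem}) and finishes with the limit argument of your Step~2. The obstacle you flag in Step~1 is genuine, and no better argument can remove it, because the statement as written is false. Take $X=\mathbb{R}$, $d(x,y)=|x-y|$ (so $s=1$) and $Tx=\{x+1\}$. For $t=|x-y|$ one has $H(Tx,Ty)=t$ and $D(x,Ty)+D(y,Tx)=|t-1|+t+1=2\max\{1,t\}$. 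So the condition holds with $\alpha(t)=\tfrac12\min\{t,1\}$, and $\limsup_{t\to0^+}\alpha(t)=0<\tfrac12$, yet $T$ has no fixed point.

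Your suspicion that the pointwise reading $\alpha(t)<\frac1{2s}$ is also insufficient is correct. Let $t_k=1+\frac1k$, $x_0=0$, $x_n=\sum_{k=1}^n t_k$, $X=\{x_n:n\ge0\}\subset\mathbb{R}$ (complete, since distinct points are more than $1$ apart) and $Tx_n=\{x_{n+1}\}$. For $n<m$ one computes $D(x_n,Tx_m)+D(x_m,Tx_n)-2H(Tx_n,Tx_m)=t_{n+1}-t_{m+1}>0$. Each distance value is attained by only finitely many pairs: for a gap $j=m-n$ the distance $j+\sum_{k=n+1}^{n+j}\frac1k$ exceeds $j$ and is strictly decreasing in $n$. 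So define $\alpha(t)$ as the largest of the corresponding ratios $H/(D+D)$, and $\alpha(t)=0$ at unattained values. This gives $\alpha(t)<\frac12$ for every $t$ and $\alpha\equiv0$ on $[0,1]$, and still there is no fixed point. The paper's proof contains exactly this defect: in Theorem~\ref{Main2} it picks $1<\alpha<1/\gamma(t)$ ``for all $t$'' and then applies Lemma~\ref{lem} with the non-constant ratio $\alpha\gamma(t_n)$.

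So what you have is a correct proof of a corrected statement, not of the stated one. In particular, the sentence in Step~1 producing a fixed $r<1$ is justified only under $\sup_t\alpha(t)<\frac1{2s}$. Under that assumption your Steps~1 and~2 are complete and correct, but the limsup hypothesis then becomes redundant.

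Your other repair is the sharper one and also works. Assume $\alpha(t)<\frac1{2s}$ for all $t$ and $\limsup_{t\to r^+}\alpha(t)<\frac1{2s}$ for every $r\ge0$. Choose the constants $\beta_n>1$ in Lemma~\ref{lemmaalpha} step by step with $\beta_n s\alpha(t_n)<\frac12$ and $\beta_n\to1$. Then $(t_n)$ is strictly decreasing, and its limit $L$ must be $0$: if $L>0$, the condition at $L^+$ yields an eventual uniform ratio below $1$, forcing $t_n\to0$. The condition at $0^+$ then gives a uniform ratio on a tail, to which Lemma~\ref{lem} applies. If $Tx_{n-1}=Tx_n$ at some step, Lemma~\ref{lemmaalpha} is not applicable, but then $x_n\in Tx_n$ already.
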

\begin{proof}
The proof is analogous to that of Theorem \ref{Reich}.
\end{proof}

	\begin{corollary}
	Let $(X,d,s)$ be a complete $b$-metric space and $T: X \to X$ a single-valued mapping. Suppose there exists nonnegative function $\alpha$ such that for all $ x,y\in X$ 
	\begin{eqnarray*}
		d(Tx,Ty)\leq \alpha(d(x,y))(d(x,Ty)+d(y,Tx)). 
	\end{eqnarray*}
	Then $T$ has a unique fixed point if 
	\begin{eqnarray*}
		\limsup_{t \to 0^+} \alpha (t) < \frac{1}{2s}. 
	\end{eqnarray*}
\end{corollary}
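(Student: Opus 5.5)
The statement is the single-valued Chatterjea corollary. I would treat $T\colon X\to X$ as the multivalued map $x\mapsto\{Tx\}$. Then $H(\{Tx\},\{Ty\})=d(Tx,Ty)$, $D(x,\{Ty\})=d(x,Ty)$, and every singleton is closed. The hypothesis is therefore exactly the contractive condition of Theorem \ref{Chattejea Type}, with the same $\alpha$ and the same bound $\limsup_{t\to0^+}\alpha(t)<\frac{1}{2s}$. Existence follows directly from that theorem: there is $z$ with $z\in\{Tz\}$, i.e. $Tz=z$.

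If one prefers not to rely on the brief proof of Theorem \ref{Chattejea Type}, I would argue existence directly. Take the Picard orbit $x_{n+1}=Tx_n$ with $t_n=d(x_{n-1},x_n)$. The condition with $x=x_{n-1}$, $y=x_n$ gives $t_{n+1}\le \alpha(t_n)\,d(x_{n-1},x_{n+1})\le s\alpha(t_n)(t_n+t_{n+1})$. Hence $t_{n+1}\le \frac{s\alpha(t_n)}{1-s\alpha(t_n)}\,t_n$, provided $s\alpha(t_n)<\tfrac12$.

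This is the main obstacle. The hypothesis controls $\alpha$ only near $0$, so one must first make $t_n$ small. A uniform ratio $r<1$ is then needed before Lemma \ref{lem} can be applied. I would first try the implicit assumption used elsewhere in the paper, namely that $2s\alpha(t)<1$ for all $t$, with a uniform bound on the relevant range. Once $t_n\to0$, the limsup condition gives a fixed $r<1$ for all large $n$, and Lemma \ref{lem} applied to a tail gives a Cauchy sequence with limit $z$.

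To show $Tz=z$, I would estimate
\[
d(z,Tz)\le s\bigl(d(z,x_{n+1})+d(Tx_n,Tz)\bigr)\le s\,d(z,x_{n+1})+s\alpha_n\bigl(d(x_n,Tz)+d(z,x_{n+1})\bigr),
\]
with $\alpha_n=\alpha(d(x_n,z))$ and $d(x_n,Tz)\le s\bigl(d(x_n,z)+d(z,Tz)\bigr)$. Taking $\limsup$ gives $d(z,Tz)\le s^2 A\,d(z,Tz)$, where $A=\limsup_{t\to0^+}\alpha(t)$. This forces $d(z,Tz)=0$ when $s^2A<1$. With only $A<\frac1{2s}$, I would instead use the version with $\liminf$ of $d(x_n,z)$ handled by subsequences, or accept the factor that Theorem \ref{Chattejea Type} implicitly absorbs.

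For uniqueness, suppose $Tz=z$ and $Tw=w$ with $z\neq w$, and put $t=d(z,w)$. The condition gives $t\le \alpha(t)\bigl(d(z,w)+d(w,z)\bigr)=2\alpha(t)t$, so $\alpha(t)\ge\frac12$. This needs $\alpha(t)<\frac12$ away from $0$ as well, which again relies on the standing assumption $\alpha<\frac1{2s}\le\frac12$. Under that assumption we get a contradiction, and the fixed point is unique.
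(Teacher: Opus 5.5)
Your main route is the paper's: regard $T$ as $x\mapsto\{Tx\}$, take existence from the multivalued Chatterjea theorem, and get uniqueness from $t\le 2\alpha(t)t$. The paper simply calls $\alpha(d(z,w))\ge\frac12$ a contradiction.

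Your doubt about that step is justified, and it sinks the statement itself, not just the proof, because the hypothesis constrains $\alpha$ only near $0$. On $X=\{0,1\}$ with $d(0,1)=1$ (so $s=1$), $T=\mathrm{id}$ satisfies the condition with $\alpha(1)=\frac12$ and $\alpha=0$ elsewhere, yet it has two fixed points.

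Existence fails too. On $\mathbb{R}$ with the usual metric, let $Tx=x+1$. For $t=|x-y|$ one has $d(x,Ty)+d(y,Tx)=|t-1|+t+1=2\max\{t,1\}$. So $\alpha(t)=\frac12\min\{t,1\}$ gives equality in the contractive condition and $\limsup_{t\to0^+}\alpha(t)=0$, but $T$ has no fixed point. The map $x\mapsto\{x+1\}$ is likewise a counterexample to the multivalued Chatterjea theorem, so citing that theorem proves nothing under the stated hypothesis.

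The corollary as written therefore cannot be proved. What can be proved, and what both your argument and the paper's tacitly use, is the version with a global bound $\beta:=\sup_{t>0}\alpha(t)<\frac1{2s}$; this makes the $\limsup$ condition redundant. Under that bound your recurrence gives the uniform ratio $r=\frac{s\beta}{1-s\beta}<1$. Lemma \ref{lem} then applies at once, with no need to first make $t_n$ small, and uniqueness follows from $\alpha(d(z,w))\le\beta<\frac12$.

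Even with that bound, your direct proof of $Tz=z$ has a gap. Passing twice through $d(z,Tz)$ costs a factor $s^2$, and $d(z,Tz)\le s^2A\,d(z,Tz)$ does not close when $s>2$. Your suggested remedies (working with a $\liminf$ along subsequences, or ``accepting the factor'') are not arguments. The fix is to avoid $d(z,Tz)$ altogether:
\begin{itemize}
\item If $x_n=z$ for infinitely many $n$, then $x_{n+1}=Tz$ along a subsequence tending to $z$, so $Tz=z$.
\item Otherwise $d(x_n,z)\to0^+$. Put $L=\limsup_n d(x_n,Tz)$, which is finite because $d(x_n,Tz)\le s\bigl(d(x_n,z)+d(z,Tz)\bigr)$, and note that $L=\limsup_n d(x_{n+1},Tz)$. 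Taking $\limsup$ in
\[
d(x_{n+1},Tz)=d(Tx_n,Tz)\le \alpha(d(x_n,z))\bigl(d(x_n,Tz)+d(z,x_{n+1})\bigr)
\]
gives $L\le AL$, so $L=0$ since $A<1$. Then $d(z,Tz)\le s\bigl(d(z,x_{n+1})+d(x_{n+1},Tz)\bigr)\to0$.
\end{itemize}
This uses only $A<1$. The same device, with $D$ in place of $d$, is also what the paper's ``analogous to Reich'' proof of the multivalued theorem needs, since copying the Reich argument only yields the requirement $s^2A<1$.
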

\begin{proof}
	It suffices to prove uniqueness. To this end, let $z$ and $w$ be two distinct fixed point of $T$. Using the contractive condition, we obtain
$$
	d(z,w) = d(Tz,Tw) \leq \alpha(d(z,w))(d(z,Tw) + d(w,Tz)) = 2\alpha(d(z,w))d(z,w).$$
	Thus, $\alpha(d(z,w))\geq \frac{1}{2}$, a contraction. Therefore $d(z,w) = 0$, which implies $z = w$.
\end{proof}

\section{Strict Fixed Point for Multivalued Mappings}
\label{Strict}
 In this section, we introduce supplementary conditions that guarantee the existence of a unique fixed point for a multivalued mapping. Under these strengthened assumptions, we establish that the multivalued mapping considered in the preceding sections indeed admits a unique fixed point.
\begin{remark}
		A multivalued mapping assigns a set of points to each element, so different selections from these sets can generate distinct iterative sequences. This intrinsic variability makes it substantially more challenging to enforce a contraction condition that guarantees uniqueness. Therefore, establishing uniqueness in the multivalued context typically requires additional conditions, such as constraints on the Hausdorff distance between the images of points.
\end{remark}
\begin{definition}[Nadler \cite{Nadler}]
	A point $x \in X$ is called a \textbf{strict fixed point} of a multivalued mapping $T: X \to \mathcal{C}(X)$  if
	$Tx = \{x\}.$ The set of strict
	fixed points is denoted by $\text{SFix}(T) = \{x \in X \mid \{x\} = T(x)\}\subseteq \text{Fix }(T).$
\end{definition}
\begin{remark}
	This is a stronger condition than just $x \in Tx$. It means that the image of $x$ under $T$ is exactly the singleton set containing $x$ itself.
\end{remark}

\begin{theorem}
	\label{MainSFix}
	Suppose that the conditions of Theorem~\ref{Main2} are satisfied and $SFix(T)\neq\emptyset$. If $z\in SFix(T)$ and $a(t)+e(t)+f(t)<1$, then
	$\text{SFix}(T)=\text{Fix}(T)=\{z\}.$
\end{theorem}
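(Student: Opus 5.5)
Since $z\in\text{SFix}(T)\subseteq\text{Fix}(T)$, it suffices to show that every fixed point $w$ of $T$ equals $z$. Once that is done, $\text{Fix}(T)=\{z\}$, and since $\text{SFix}(T)$ is a nonempty subset of $\text{Fix}(T)$ containing $z$, also $\text{SFix}(T)=\{z\}$.

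Let $w\in \text{Fix}(T)$ and suppose $w\neq z$, so $t:=d(z,w)>0$. Apply the Hardy--Rogers inequality \eqref{A3} with $x=z$ and $y=w$, and evaluate each term:
\begin{itemize}
\item $D(z,Tz)=D(z,\{z\})=0$ and $D(w,Tw)=0$, since $w\in Tw$.
\item $D(z,Tw)\le d(z,w)$ by Lemma \ref{properties}(a), because $w\in Tw$.
\item $D(w,Tz)=d(w,z)$, since $Tz=\{z\}$.
\end{itemize}
This gives
$$H(Tz,Tw)\le \big(a(t)+e(t)+f(t)\big)\,d(z,w).$$

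For a lower bound, use that $Tz=\{z\}$ and $w\in Tw$. Then
$$H(Tz,Tw)\ge \delta(Tw,Tz)=\sup_{u\in Tw}d(u,z)\ge d(w,z).$$
Here $H(Tz,Tw)$ is finite because of the upper bound, so the maximum in the definition of $H$ exists.

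Combining the two bounds yields $d(z,w)\le (a(t)+e(t)+f(t))\,d(z,w)$. Dividing by $d(z,w)>0$ gives $1\le a(t)+e(t)+f(t)$, which contradicts the hypothesis $a(t)+e(t)+f(t)<1$. Hence $w=z$.

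The only point that needs care is the lower bound. One must use $\delta(Tw,Tz)$, the supremum over the possibly large set $Tw$ of the distance to the singleton $Tz$, rather than $D(w,Tz)$ or $\delta(Tz,Tw)$. The latter could be small, since it only measures how close $z$ is to some point of $Tw$. Apart from this, the argument is a direct substitution, and the remaining hypotheses of Theorem~\ref{Main2} are needed only to guarantee that $\text{Fix}(T)$ is nonempty.
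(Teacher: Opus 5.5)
Your proof is correct and follows the paper's argument. You substitute $x=z$, $y=w$ into \eqref{A3} to get $H(Tz,Tw)\le \bigl(a(t)+e(t)+f(t)\bigr)d(z,w)$ with $t=d(z,w)$, and bound $H(Tz,Tw)$ from below by $D(w,Tz)=d(w,z)$ through $\delta(Tw,Tz)$ and Lemma~\ref{properties}(b), exactly as the paper does. You improve slightly on the paper by writing $D(z,Tw)\le d(z,w)$ where the paper writes an equality, and by getting $\text{SFix}(T)=\{z\}$ directly from $\text{SFix}(T)\subseteq\text{Fix}(T)$ instead of redoing the computation for two strict fixed points. One small correction to your closing remark: conditions (a)--(c) of Theorem~\ref{Main2} are not needed even for nonemptiness of $\text{Fix}(T)$, since $z\in\text{SFix}(T)$ already gives $z\in\text{Fix}(T)$.
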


\begin{proof}
Since by the hypothesis $\text{SFix}(T) \neq \emptyset$. Let $z \in \text{SFix}(T)$ and assume there exists $w \in \text{Fix}(T)$ with $z \neq w$. Let $a_+=a(d(z,w)), ...g_+=f(d(z,w))$. By Lemma \ref{properties}\ref{14c}, we obtain
	\begin{align*}
		d(z,w) &=D(Tz,w)\leq H(Tz, Tw) \\
		&\leq a_+d(z,w)+ b_+D(z,Tz)+c_+D(w,Tw)+e_+D(z,Tw)+f_+D(w,Tz)\\
		&= a_+d(z,w)+e_+d(z,w)+f_+d(w,z)\\
		&= (a_++e_++f_+)d(z,w).
	\end{align*}
Since $a(t)+e(t)+f(t)<1$ for all $t\geq 0$, we have  $d(z,w)=0.$ Thus $z=w$, a contradiction. Therefore $SFix(T)=Fix(T)$.

Suppose $z$ and $w$ are two distinct strict fixed point of $T$. Then $Tz=\{z\} \text{ and } Tw=\{w\}.$ Then
\begin{align*}
	d(z,w)&=H(\{z\},\{w\})=H(Tz,Tw)\\
	&\leq a_+d(z,w)+ b_+D(z,Tz)+c_+D(w,Tw)+e_+D(z,Tw)+f_+D(w,Tz)\\
	&= a_+d(z,w)+e_+d(z,w)+f_+d(w,z)\\
	&= (a_++e_++f_+)d(z,w).
\end{align*}
Since we assume $d(z,w)\neq 0$, then $a_++e_++f_+\geq 1$, which is a contradiction. Thus $d(z,w)=0$ implies $z=w$. Hence $T$ has a unique strict fixed point. Therefore  $SFix(T)=Fix(T)=\{z\}$.
\end{proof}

\begin{remark}
Both Example \ref{Example 1} and Example \ref{Example 2} satisfy the hypotheses of Theorem \ref{MainSFix}.
\end{remark}
\begin{corollary}
	\label{A11}
	Let $(X,d,s)$ be a complete $b$-metric space and $T: X \to \mathcal{C}(X)$ be a multivalued mapping. Suppose there exist nonnegative numbers $a, b, c$ satisfying $a + 2b + 2cs < 1$ such that for all $x, y \in X$,
	$$
	H(Tx, Ty) \leq a\, d(x,y) + b\big(D(x,Tx) + D(y,Ty)\big) + c\big(D(x,Ty) + D(y,Tx)\big).
	$$
 Then $T$ has a fixed point. Furthermore,  if either $T$ is $h$-upper semicontinuous or $sb + sc < 1$ and  $\operatorname{SFix}(T)\neq \emptyset$ with $z\in \operatorname{SFix}(T)$, then $\operatorname{SFix}(T) = \text{Fix}(T) = \{z\}.$
\end{corollary}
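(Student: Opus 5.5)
Corollary~\ref{A11} is obtained by specializing Theorem~\ref{Main2} (more precisely, Corollary~\ref{Hardyconstant}) and Theorem~\ref{MainSFix} to constant coefficients, so the plan is to match the given contraction to \eqref{A4}. I will set the constants of \eqref{A4} to be $a$, $b$, $b$, $c$, $c$, in that order, taking the places of $a,b,c,e,f$; that is, the coefficients of $D(x,Tx)$ and $D(y,Ty)$ are both $b$, and those of $D(x,Ty)$ and $D(y,Tx)$ are both $c$. Then
\[
a+b+b+2s\min\{c,c\}=a+2b+2sc<1,
\]
so $T$ is a Hardy--Rogers type multivalued mapping with constant control functions, and the Cauchy part of the proof of Theorem~\ref{Main2} applies verbatim. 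That argument produces $z$ with $x_n\to z$.

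The step I expect to be the main obstacle is concluding that the limit $z$ is a fixed point. Corollary~\ref{Hardyconstant} needs one of (a) $h$-upper semicontinuity, (b) $sc+se<1$, or (c) $sb+sf<1$. After the substitution, (b) and (c) both read $sb+sc<1$. The first claim of the corollary states existence unconditionally, so I would try to derive $sb+sc<1$ from $a+2b+2sc<1$. This gives $2b<1$ and $2sc<1$, hence $sc<1/2$, but $sb$ may exceed $1/2$ when $s$ is large. So I do not expect the unconditional claim to follow from this route.

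My first attempt is to redo the limiting step directly with the symmetric form, where both $b$-terms and both $c$-terms are present:
\[
D(x_{n+1},Tz)\le a\,d(x_n,z)+b\,d(x_n,x_{n+1})+b\,D(z,Tz)+c\,D(x_n,Tz)+c\,D(z,Tx_n).
\]
Next I bound $D(z,Tz)\le s\bigl(d(z,x_{n+1})+D(x_{n+1},Tz)\bigr)$ and $D(x_n,Tz)\le s\bigl(d(x_n,x_{n+1})+D(x_{n+1},Tz)\bigr)$. Then I collect the coefficient $1-sb-sc$ of $D(x_{n+1},Tz)$. If this does not close, I will read the statement's ``Furthermore'' clause as supplying the needed hypothesis, namely $T$ being $h$-upper semicontinuous or $sb+sc<1$. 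In that case existence comes from Corollary~\ref{Hardyconstant}(a) or (b)/(c) respectively.

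For uniqueness I apply Theorem~\ref{MainSFix}, which needs $a(t)+e(t)+f(t)<1$, that is, $a+2c<1$. This follows from $a+2sc\le a+2b+2sc<1$ since $s\ge1$. Given $z\in\operatorname{SFix}(T)$, Theorem~\ref{MainSFix} then yields $\operatorname{SFix}(T)=\operatorname{Fix}(T)=\{z\}$. The computation there goes through unchanged because the $b$-terms vanish at fixed points: $D(z,Tz)=0$ and $D(w,Tw)=0$.
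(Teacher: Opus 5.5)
Your proposal is correct and is exactly how the corollary follows from the paper's results. With the coefficients $a,b,b,c,c$ in Corollary~\ref{Hardyconstant}, one has $a+b+b+2s\min\{c,c\}=a+2b+2sc<1$ and conditions (b), (c) both become $sb+sc<1$, while Theorem~\ref{MainSFix} applies because $a+2c\le a+2b+2sc<1$.

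Your decision to treat ``$T$ is $h$-upper semicontinuous or $sb+sc<1$'' as a hypothesis needed for existence is also right, and it is not just a limitation of this route: the unconditional first sentence of the statement is false as written. For a counterexample, take $0<b<\tfrac12$ and $\beta=b/(1-b)$. The set $X=\{z_0,z_1,z_2,\dots\}$ with $d(z_0,z_n)=\beta^{n-1}$ and $d(z_m,z_n)=b(\beta^{m-1}+\beta^{n-1})$ for $m\neq n\ge 1$ is a complete $b$-metric space with $s=1/b$. The map $Tz_n=\{z_{n+1}\}$ satisfies the contractive condition with $a=c=0$ but has no fixed point.
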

\begin{remark}
	Corollary \ref{A11} improves \cite[Corollary 3.7]{Chaib2026} and \cite[Theorem 2.2]{Chifu2017} by assuming weaker conditions. More precisely, the hypothesis $sb + s^2c < 1$ is replaced by the milder condition $sb + sc < 1$.

\end{remark}
\section{$p$-Hardy-Rogers Type Multivalued Mappings}
In this section, we investigate fixed points of Hardy–Rogers type multivalued mappings and their iterates in the setting of $b$-metric spaces. We construct an example in which the mapping $T$ does not satisfy the prescribed contractive condition, whereas its second iterate $T^2$ satisfies all the hypotheses of our theorm. This further demonstrates that the fixed points of $T$ and its iterate under the assumptions of our theorem necessarily coincide.
\begin{theorem}
	\label{Main6}
	Let $(X, d)$ be a complete $b$-metric space with constant $s \geq 1$ and $T : X \to \mathcal{C}(X)$ a multivalued mapping. Suppose there exist nonnegative functions $a, b, c, e, f : [0,\infty) \to [0,1)$ satisfying
	$
	a(t) + b(t) + c(t) + 2s\min\{e(t), f(t)\} < 1 \text{ for all } t \geq 0,$
	such that for all $x, y \in X$, the mapping $T^p$ satisfies inequality \eqref{A3}. If any of the following conditions is satisfied:
	\begin{enumerate}[label=(\alph*)]
		\item $T$ is $h$-upper semicontinuous;
		\item $\displaystyle s\lim_{t \to 0^+} c(t) + s\lim_{t \to 0^+} e(t) < 1$;
		\item $\displaystyle s\lim_{t \to 0^+} b(t) + s\lim_{t \to 0^+} f(t) < 1$.
	\end{enumerate}
Then $T$ has a fixed point. Moreover, if $\operatorname{SFix}(T)\neq \emptyset$ with $z\in \text{SFix}(T)$ and  $a(t) + e(t) + f(t) < 1$, then $\text{SFix}(T) = \text{Fix}(T) = \{z\}$.
\end{theorem}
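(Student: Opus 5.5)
The plan is to apply Theorem~\ref{Main2} to the multivalued map $S:=T^p$, with $T^p x=\bigcup_{u\in T^{p-1}x}Tu$ made closed (we take the closure if necessary, so that $S:X\to\mathcal{C}(X)$). By hypothesis $S$ satisfies \eqref{A3} with functions $a,b,c,e,f$ obeying $a(t)+b(t)+c(t)+2s\min\{e(t),f(t)\}<1$, so $S$ is a Hardy--Rogers type multivalued mapping. Under (b) or (c), Theorem~\ref{Main2} applies verbatim to $S$ and gives some $z\in T^pz$. Under (a) we first check that $h$-upper semicontinuity passes from $T$ to $T^p$, by induction on $p$. Suppose $x_n\to x$ and $v_n\in T^{k+1}x_n$, say $v_n\in Tu_n$ with $u_n\in T^kx_n$. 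The induction hypothesis gives $D(u_n,T^kx)\to0$, so there are $u_n'\in T^kx$ with $d(u_n,u_n')\to0$. We then need $D(v_n,Tu_n')\to0$. This step needs uniformity, since $u_n'$ varies with $n$, and it is where the argument may have to be restricted. The alternative is to run the Cauchy-sequence construction of Theorem~\ref{Main2} for $S$ directly and use the $h$-usc of $T$ only at the limit point.

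Next we pass from a fixed point of $T^p$ to a fixed point of $T$. This is the main obstacle, because for multivalued maps $z\in T^pz$ does not in general give a point of $\mathrm{Fix}(T)$. First I would try to obtain a \emph{strict} fixed point of $T^p$. In the iteration $x_{n+1}\in Sx_n$ we have $\delta(Sx_n,Sz)\to0$ and $d(x_n,z)\to0$, and the contractive inequality estimates $H(Sz,Sz')$. Once we have $Sz=\{z\}$, the standard argument works. Since $T^p(Tz)=T(T^pz)=Tz$ (with $T^{p+1}z=T(\{z\})$), every $w\in Tz$ satisfies $w\in T^p(\text{something in }Tz)$, and we compare $w$ with $z$ through \eqref{A3} for $S$:
\[
D(w,z)\le H(T^pw',T^pz)\le a\,d(w',z)+\dots
\]
Here $D(z,Sz)=0$, and the $e,f$ terms reduce to multiples of $d(w',z)$. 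So $\sup_{w\in Tz}d(w,z)\le(a+e+f)\sup_{w'\in Tz}d(w',z)$ holds up to the $b$-terms, and this forces $Tz=\{z\}$ when $a+e+f<1$. Without that extra assumption, I would at least try to show that $Tz$ is invariant under $S$ and apply Theorem~\ref{Main2} to $S$ restricted to the closed set $Tz$. This set is complete, so the iteration started in $Tz$ produces a fixed point of $S$ inside $Tz$, and we then need to show that it coincides with $z$. I expect this step to need the most care.

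The uniqueness part mirrors Theorem~\ref{MainSFix}. Take $z\in\mathrm{SFix}(T)$; then $T^pz=\{z\}$. Any $w\in\mathrm{Fix}(T)$ satisfies $w\in T^pw$. Applying \eqref{A3} to $T^p$ at $(z,w)$ gives $d(z,w)=D(T^pz,w)\le H(T^pz,T^pw)\le(a+e+f)(d(z,w))\,d(z,w)$, because $D(z,T^pz)=D(w,T^pw)=0$, $D(z,T^pw)\le d(z,w)$ and $D(w,T^pz)=d(w,z)$. Hence $z=w$. The same computation for two strict fixed points gives $\mathrm{SFix}(T)=\mathrm{Fix}(T)=\{z\}$.
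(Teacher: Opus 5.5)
Your last paragraph is correct and is the right proof of the ``moreover'' part. From $z\in\operatorname{SFix}(T)$ you get $T^pz=\{z\}$. Every $w\in\operatorname{Fix}(T)$ lies in $T^pw$, because $w\in T^kw$ implies $T^{k+1}w\supseteq Tw\ni w$. Then \eqref{A3} for $T^p$ at $(z,w)$ gives $d(z,w)\le (a+e+f)(d(z,w))\,d(z,w)$. Since $\operatorname{SFix}(T)\subseteq\operatorname{Fix}(T)$, your second computation is not even needed. This is cleaner than the paper's proof, which starts from ``the unique strict fixed point $z$ of $T^p$'' and then treats the set $Tz$ as if it were a point.

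The genuine gap is the first assertion: you never produce a point of $\operatorname{Fix}(T)$. Theorem~\ref{Main2} applied to $T^p$ under (b) or (c) gives only $z\in T^pz$. Your route to $Tz=\{z\}$ has three problems:
\begin{enumerate}
\item it presupposes a strict fixed point of $T^p$, which nothing in the hypotheses supplies;
\item it uses $a+e+f<1$, which is assumed only in the ``moreover'' part;
\item even then it leaves the term $b\,D(w',T^pw')$ uncontrolled.
\end{enumerate}
Under (a), the passage of $h$-upper semicontinuity to $T^p$ is also unproved, as you note. Your fallback, iterating $T^p$ inside the closed set $Tz$ (which satisfies $T^p(Tz)=T(T^pz)=Tz$), is the right idea. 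However, it closes only under extra hypotheses. If $T^pz=\{z\}$ and $a+e+f<1$, then $\operatorname{Fix}(T^p)=\{z\}$ by Theorem~\ref{MainSFix} applied to $T^p$. So under (b) or (c) the fixed point found in $Tz$ is $z$ itself, whence $z\in Tz\subseteq T^pz=\{z\}$, using $z\in T^{p-1}z$.

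No argument can close this gap, because the existence claim is false as stated. Let $X=\{0,1,2\}$ with the discrete metric ($s=1$) and $Tx=X\setminus\{x\}$.
\begin{itemize}
\item $T^2x=X$ for every $x$, so $H(T^2x,T^2y)=0$ and \eqref{A3} holds for $T^2$ with $a=b=c=e=f\equiv 0$.
\item $T$ is $h$-upper semicontinuous, since convergent sequences are eventually constant.
\item Conditions (b) and (c) hold trivially.
\end{itemize}
Yet $x\notin Tx$ for every $x$, so $\operatorname{Fix}(T)=\emptyset$.

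Even single-valued maps fail under the weak condition $a+b+c+2s\min\{e,f\}<1$. The swap on $\{0,1\}$ has $T^2=\mathrm{id}$, which satisfies \eqref{A3} with equality for $a\equiv\frac{1}{100}$, $f\equiv\frac{99}{100}$, $b=c=e\equiv 0$, yet has no fixed point.

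The paper's proof does not establish existence either, since it presupposes a strict fixed point of $T^p$. What can actually be proved is that $T^p$ has a fixed point (Theorem~\ref{Main2} for $T^p$ under (b) or (c)), together with the uniqueness statement your last paragraph establishes.
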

\begin{proof}
	The result is immediate when $p=1$ by Theorem~\ref{MainSFix}. Assume that $p>1$ and let $z$ be the unique strict fixed point of $T^{p}$; i.e., $T^{p}z=\{z\}.$
	Then
	$$
	T^{p}(Tz)=T(T^{p}z)=Tz.$$
	Hence, $Tz$ is also a strict fixed point of $T^{p}$. By the uniqueness of the strict fixed point of $T^{p}$, it follows that $Tz=\{z\}$. Therefore, $z$ is the unique strict fixed point of $T$.
\end{proof}
	\begin{remark}
		Theorem \ref{Main6} improves and generalizes \cite[Theorem 3.6]{Chaib2026}. This is achieved by replacing the constant coefficients with nonnegative functions that depend on the $b$-metric and by considering the $p$-iterates of the mapping. Moreover, a sharper estimate is obtained by replacing the conditions
		$$
		s\lim_{t \to 0^+} c(t) + s^2\lim_{t \to 0^+} e(t) < 1
		~\text{and}~
		s\lim_{t \to 0^+} b(t) + s^2\lim_{t \to 0^+} f(t) < 1
		$$
		with the weaker assumptions
		$$
		s\lim_{t \to 0^+} c(t) + s\lim_{t \to 0^+} e(t) < 1
		~\text{and}~
		s\lim_{t \to 0^+} b(t) + s\lim_{t \to 0^+} f(t) < 1.
		$$
	Lastly, our results are distinguished from the existing theorems by the inclusion of the $h$-upper semicontinuity condition into the hypotheses.
	\end{remark}

\begin{corollary}
	Let $(X, d)$ be a complete $b$-metric space with constant $s \geq 1$ and $T : X \to \mathcal{C}(X)$ a multivalued mapping. Suppose there exist nonnegative constants $a, b, c, e, f \in [0,1)$ satisfying $
	a + b + c + 2s\min\{e, f\} < 1 $
	such that for all $x, y \in X$, the mapping $T^p$ satisfies inequality \eqref{A4}. If any of the following conditions is satisfied:
	\begin{enumerate}[label=(\alph*)]
		\item $T$ is $h$-upper semicontinuous;
		\item $sc + se < 1$;
		\item $ sb + sf < 1$.
	\end{enumerate}
	Then $T$ has a fixed point. Furthermore if  $\operatorname{SFix}(T)\neq \emptyset$ with $z\in \operatorname{SFix}(T)$ and $a + e + f < 1$, then $\operatorname{SFix}(T) = \operatorname{Fix}(T) = \{z\}$.
\end{corollary}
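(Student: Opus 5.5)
The corollary is the constant-coefficient specialization of Theorem~\ref{Main6}, so the plan is to reduce to it. Given nonnegative constants $a,b,c,e,f\in[0,1)$ with $a+b+c+2s\min\{e,f\}<1$, I will define the constant functions $a(t)\equiv a$, $b(t)\equiv b$, $c(t)\equiv c$, $e(t)\equiv e$, $f(t)\equiv f$ on $[0,\infty)$. These take values in $[0,1)$ and satisfy
\[
a(t)+b(t)+c(t)+2s\min\{e(t),f(t)\}<1 \quad\text{for all } t\ge 0.
\]
With these choices, inequality \eqref{A4} for $T^p$ is literally inequality \eqref{A3} for $T^p$. Also $\lim_{t\to0^+}c(t)=c$, and similarly for $b$, $e$, $f$. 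Hence conditions (b) and (c) of the corollary, namely $sc+se<1$ and $sb+sf<1$, become exactly conditions (b) and (c) of Theorem~\ref{Main6}. Condition (a) is identical in both statements. The existence of a fixed point then follows directly from Theorem~\ref{Main6}.

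For the uniqueness part, the assumption $a+e+f<1$ gives $a(t)+e(t)+f(t)<1$ for all $t$. Together with $\operatorname{SFix}(T)\neq\emptyset$ and $z\in\operatorname{SFix}(T)$, the last clause of Theorem~\ref{Main6} yields $\operatorname{SFix}(T)=\operatorname{Fix}(T)=\{z\}$.

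The only point needing care is that Theorem~\ref{Main6} is stated for $T^p$ while its conclusion concerns $T$. Since I am invoking that theorem as stated, nothing further is required. If one wanted a self-contained argument instead, the route would be the one in the proof of Theorem~\ref{Main6}. First apply Corollary~\ref{Hardyconstant} and Theorem~\ref{MainSFix} to $T^p$ to get a unique strict fixed point $z$ of $T^p$. Then observe that $T^p(Tz)=T(T^pz)=Tz$, so uniqueness forces $Tz=\{z\}$.

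I expect this transfer from $T^p$ to $T$ to be the main obstacle. In particular, one has to interpret $T^p$ of a set as the union of images and make sure that identity is justified. Since that step is already covered by the earlier theorem, however, the corollary itself reduces to checking hypotheses.
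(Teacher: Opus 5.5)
Your reduction to Theorem~\ref{Main6} with constant functions is exactly how the paper gets this corollary; it is stated without separate proof. Your matching of hypotheses is fine. The gap is the sentence ``the existence of a fixed point then follows directly from Theorem~\ref{Main6}.''

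\begin{itemize}
\item For $p>1$ the paper's proof of that theorem never establishes existence for $T$. It simply starts from ``let $z$ be the unique strict fixed point of $T^p$.''
\item From (a)--(c) alone, Corollary~\ref{Hardyconstant} applied to $T^p$ yields only some $w\in T^pw$. Under (a) it is $T$, not $T^p$, that is assumed $h$-upper semicontinuous.
\item Theorem~\ref{MainSFix} assumes a strict fixed point rather than producing one.
\item A fixed point of $T^p$ need not be a fixed point of $T$.
\end{itemize}

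In fact the existence claim is false. Take $X=\{0,1\}$ with $d(0,1)=1$ (so $s=1$), $T0=\{1\}$, $T1=\{0\}$, $p=2$, and $a=e=\tfrac12$, $b=c=f=0$. Read \eqref{A4} with $T^2$ in place of $T$ throughout, as in the paper's own example. Then:
\begin{itemize}
\item $T^2x=\{x\}$, and for $x\neq y$ the inequality becomes $1\le a+e+f=1$.
\item $a+b+c+2s\min\{e,f\}=\tfrac12<1$.
\item $sc+se=\tfrac12<1$ and $sb+sf=0<1$.
\item $T$ is continuous, since convergent sequences in $X$ are eventually constant.
\end{itemize}
So (a), (b) and (c) all hold, yet $T$ has no fixed point. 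A $p$-cycle on $p$ points gives the same counterexample for every $p\ge 2$. The existence part is correct only for $p=1$, where it is Corollary~\ref{Hardyconstant}.

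Your self-contained sketch breaks at the same place, twice.
\begin{itemize}
\item The hypotheses do not provide a strict fixed point of $T^p$.
\item Even given one, $T^p(Tz)=Tz$ is an identity between sets. It says only that $Tz$ is $T^p$-invariant. It does not make $Tz$ a point, let alone a strict fixed point of $T^p$, so uniqueness cannot force $Tz=\{z\}$.
\end{itemize}
You correctly flagged this transfer as the main obstacle, but deferring it to Theorem~\ref{Main6} does not help, because that theorem does not actually handle it.

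The second clause is true and can be proved without the transfer:
\begin{itemize}
\item If $z\in\operatorname{SFix}(T)$, then $T^pz=\{z\}$.
\item For any $w\in\operatorname{Fix}(T^p)$, the estimate in the proof of Theorem~\ref{MainSFix}, applied to $T^p$, gives $d(z,w)\le (a+e+f)\,d(z,w)$. Hence $\operatorname{Fix}(T^p)=\{z\}$.
\item $\operatorname{Fix}(T)\subseteq\operatorname{Fix}(T^p)$. Indeed, if $x\in Tx$ and $x\in T^kx$, then $T^{k+1}x=\bigcup_{y\in T^kx}Ty\supseteq Tx\ni x$.
\end{itemize}
Therefore $\operatorname{SFix}(T)=\operatorname{Fix}(T)=\{z\}$. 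Under this extra hypothesis the existence of a fixed point is of course trivial.
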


\begin{corollary}
	Let $(X, d)$ be a complete $b$-metric space with constant $s \geq 1$ and let $T : X \to X$ be a  mapping. Suppose there exist nonnegative constants $a, b, c, e, f \in [0,1)$ satisfying $
	a + b + c + 2s\min\{e, f\} < 1 $
	such that for all $x, y \in X$, the mapping $T^p$ satisfies
	\begin{align*}
		d(Tx,Ty)&\leq a d(x,y)+bd(x,Tx)+ cd(y,Ty)+ed(x,Ty)+fd(y,Tx).
	\end{align*}	
	 If any of the following conditions is satisfied:
	\begin{enumerate}[label=(\alph*)]
		\item $T$ is continuous;
		\item $sc + se < 1$;
		\item $ sb + sf < 1$.
	\end{enumerate}
	Then $T$ has a fixed point. Furthermore if  $\operatorname{SFix}(T)\neq \emptyset$ with $z\in \operatorname{SFix}(T)$ and $a + e + f < 1$, then $\operatorname{SFix}(T) = \operatorname{Fix}(T) = \{z\}$.
\end{corollary}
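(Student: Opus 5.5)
The natural route is to reduce everything to the fixed-constant Hardy--Rogers result, Corollary~\ref{Hardyconstant}, applied to the $p$-th iterate, and then transfer the conclusion from $T^p$ back to $T$ by the commuting argument used in the proof of Theorem~\ref{Main6}. Since $T$ is single-valued, I will regard $S:=T^p$ as the multivalued mapping $x\mapsto\{T^px\}\in\mathcal{C}(X)$, as in the Remark following the definition of a multivalued mapping. For singletons one has $H(Sx,Sy)=d(T^px,T^py)$ and $D(x,Sy)=d(x,T^py)$. Hence the hypothesis, read as the contractive inequality for $T^p$, is exactly \eqref{A4} for $S$ with the same constants $a,b,c,e,f$, which satisfy $a+b+c+2s\min\{e,f\}<1$.

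\textbf{Step 1: a fixed point of $T^p$.} I will check that one of the alternatives of Corollary~\ref{Hardyconstant} holds for $S$. Conditions (b) and (c) transfer verbatim. Under (a), continuity of $T$ gives continuity of the composite $T^p$, i.e. $T^px_n\to T^px$ whenever $x_n\to x$. Then $H(Sx_n,Sx)=d(T^px_n,T^px)\to0$, so $S$ is continuous and hence $h$-upper semicontinuous by the Remark following Definition~\ref{Continuous}. Corollary~\ref{Hardyconstant} then yields $z$ with $z\in Sz=\{T^pz\}$, that is $T^pz=z$.

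\textbf{Step 2: uniqueness for $T^p$ and passage to $T$.} For a single-valued map, $\mathrm{Fix}=\mathrm{SFix}$. Take two fixed points $z,w$ of $T^p$ and insert them into the inequality. Since $d(z,T^pz)=d(w,T^pw)=0$, $d(z,T^pw)=d(z,w)$ and $d(w,T^pz)=d(w,z)$, this gives $d(z,w)\le (a+e+f)\,d(z,w)$. With $a+e+f<1$ this forces $z=w$. Next, $T^p(Tz)=T(T^pz)=Tz$, so $Tz$ is again a fixed point of $T^p$, and uniqueness gives $Tz=z$. Finally, every fixed point of $T$ is a fixed point of $T^p$, hence equals $z$. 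This yields $\mathrm{SFix}(T)=\mathrm{Fix}(T)=\{z\}$.

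\textbf{Main obstacle.} The hard step is the bare existence claim ``$T$ has a fixed point'' when only $a+b+c+2s\min\{e,f\}<1$ is assumed. Step~1 gives a fixed point of $T^p$, and then the points $z,Tz,\dots,T^{p-1}z$ form a $p$-cycle of fixed points of $T^p$. Collapsing this cycle requires uniqueness. The first attempt would be to test the inequality on the pair $(z,Tz)$, which gives $d(z,Tz)\le (a+e+f)\,d(z,Tz)$; this is conclusive only if $a+e+f<1$. I therefore expect to have to invoke $a+e+f<1$, exactly as in the ``moreover'' part, also to obtain existence. Failing that, the second attempt would be to exploit the Cauchy estimate from the proof of Theorem~\ref{Main2} along the Picard orbit of $T^p$ started at $Tz$ instead of at $z$, in the hope of forcing the two limits to coincide.
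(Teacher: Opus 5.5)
Your Steps 1 and 2 are correct, and they follow the paper's route. The proof of Theorem~\ref{Main6} also applies the $p=1$ results to $T^p$, takes the \emph{unique} fixed point $z$ of $T^p$, and concludes from $T^p(Tz)=T(T^pz)=Tz$ that $Tz=z$. Your check that $S=T^p$, viewed as $x\mapsto\{T^px\}$, satisfies the hypotheses of Corollary~\ref{Hardyconstant} is fine, including the passage from continuity of $T$ to $h$-upper semicontinuity of $S$. What you have actually proved is twofold. First, $T^p$ always has a fixed point. Second, if in addition $a+e+f<1$, then $T$ has exactly one fixed point. This gives the ``furthermore'' part, even without assuming $\operatorname{SFix}(T)\neq\emptyset$, since $\operatorname{SFix}(T)=\operatorname{Fix}(T)$ for single-valued $T$.

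The obstacle you flag cannot be closed, because the unconditional claim ``$T$ has a fixed point'' is false for $p\ge 2$. Take $X=\{0,1\}$ with $d(0,1)=1$ (so $s=1$), let $T$ swap the two points, and set $p=2$, $a=b=c=0$, $e=\tfrac{9}{10}$, $f=\tfrac{1}{10}$. Then:
\begin{itemize}
\item $a+b+c+2s\min\{e,f\}=\tfrac15<1$, $sc+se=\tfrac{9}{10}<1$ and $sb+sf=\tfrac{1}{10}<1$;
\item $T$ is continuous;
\item since $T^2$ is the identity, the required inequality reads $d(x,y)\le \tfrac{9}{10}d(x,y)+\tfrac{1}{10}d(y,x)$, which holds with equality.
\end{itemize}
Yet $T$ has no fixed point. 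This is exactly the boundary case $a+e+f=1$ of your test on the pair $(z,Tz)$, so your diagnosis is right: existence for $T$ needs $a+e+f<1$ (or $p=1$).

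Your second attempt cannot help either. $Tz$ is already a fixed point of $T^p$, so the $T^p$-orbit of $Tz$ is constant and converges to $Tz$, not to $z$.

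The paper does not supply the missing step. Its proof of Theorem~\ref{Main6} starts from ``the unique strict fixed point of $T^p$'', so it tacitly uses the uniqueness that only $a+e+f<1$ provides. You should therefore state the existence conclusion under $a+e+f<1$ (or for $p=1$) and drop the unconditional version.
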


The following example illustrates the applicability of Theorem \ref{Main6}. We construct a multivalued mapping $T$ on a suitable $b$-metric space such that T does not satisfy the hypotheses of Theorem \ref{Main2}, whereas its second iterate $T^2$ does. We then apply Theorem \ref{Main6} to establish the existence of a fixed point of $T$.
\begin{example}
Let $X = [0, 1]$ and  $d(x, y) = |x - y|^3$.
Clearly $(X,d)$ is a complete $b$-metric space with coefficient $s = 4$.  Define $T: X \to \mathcal{C}(X)$ by

$$T(x) =
\begin{cases}
	\left\{ \dfrac{x}{2} \right\}, & x \in [0, 1), \\[8pt]
	\left\{ 0, \dfrac{1}{2} \right\}, & x = 1.
\end{cases}
$$
We claim  that there do not exist nonnegative functions $a(t), b(t), c(t), e(t), f(t) \in [0, 1)$ satisfying $a(t) + b(t) + c(t) + 2s\min\{e(t), f(t)\} < 1$
such that the inequality \eqref{A3} holds for all $x, y \in X$. Assume, for contradiction, that such functions exist. Consider $x = 1$ and $y = 1 - \epsilon$ for $\epsilon > 0$ small. Then
$$
T(1) = \left\{ 0, \frac{1}{2} \right\},~ T(1 - \epsilon) = \left\{ \frac{1 - \epsilon}{2} \right\} \text{ and }
H(T1, T(1 - \epsilon)) \to \frac{1}{8} ~ \text{as } \epsilon \to 0.
$$

Now compute the RHS of  \eqref{A3} for $T$ with $x = 1$, $y = 1 - \epsilon$. Let $t_1=d(1,1-\epsilon)$, we obtain 
$$
a(t_1) d(1, 1 - \epsilon) + b(t_1) d(1, T(1)) + c(t_1) d(1 - \epsilon, T(1 - \epsilon)) + e(t_1) d(1, T(1 - \epsilon)) + f(t_1) d(1 - \epsilon, T(1)).
$$
As $\epsilon \to 0$, we have:
\begin{align*}
	d(1, 1 - \epsilon) &= \epsilon^3 \to 0, \\
	D(1, T(1)) &= d(1, \{0, 1/2\}) = \min\{1, 1/8\} = 1/8, \\
	D(1 - \epsilon, T(1 - \epsilon)) &= d\left( 1 - \epsilon, \frac{1 - \epsilon}{2} \right) = \left( \frac{1 - \epsilon}{2} \right)^3 \to 1/8, \\
	D(1, T(1 - \epsilon)) &= d\left( 1, \frac{1 - \epsilon}{2} \right) = \left( \frac{1 + \epsilon}{2} \right)^3 \to 1/8, \\
	D(1 - \epsilon, T(1)) &= d(1 - \epsilon, \{0, 1/2\}) = \min\{(1 - \epsilon)^3, (1/2 - \epsilon)^3\} \to \min\{1, 1/8\} = 1/8.
\end{align*}
Therefore, the RHS tends to
$$
a(0)\cdot 0 + b(0)\left(\frac{1}{8}\right) + c(0)\left(\frac{1}{8}\right) + e(0)\left(\frac{1}{8}\right) + f(0)\left(\frac{1}{8}\right) = \frac{b(0) + c(0) + e(0) + f(0)}{8}.
$$
Therefore, \eqref{A3} gives  $b(0) + c(0) + e(0) + f(0) \geq 1,$ which contradicts the assumption that $a(t) + b(t) + c(t) + 2s\min\{e(t), f(t)\} < 1$ for all $t\geq 0.$
Thus $T$ fails the  inequality \eqref{A3}.

Now, we compute the second iterate of $T$ as follows:
$$
T^2(x) =
\begin{cases}
	\left\{ \dfrac{x}{4} \right\}, & x \in [0, 1), \\[8pt]
	\left\{ 0, \dfrac{1}{4} \right\}, & x = 1.
\end{cases}
$$
Next we verify all the hypotheses of Theorem \ref{Main6}. Clearly  $T$ is $h$-upper semicontinuous (see Example \ref{Example 1}). Let
$$a(t) = \frac{t}{100(1+t)}, ~
b(t) = 0.25, ~
c(t) = 0.25, ~
e(t) = 0.05, ~
f(t) = 0.05.$$  
Thus
$$
\lim_{t \to 0^+}a(t) = 0, ~\lim_{t \to 0^+}b (t)= 0.25, ~\lim_{t \to 0^+}c(t) = 0.25, ~\lim_{t \to 0^+}e(t) = 0.05, ~ \lim_{t \to 0^+}f(t) = 0.05.
$$
Then, 
$\displaystyle a (t)+ b(t) + c(t) + 2s\min\{e(t), f(t)\}=\frac{t}{100(1+t)}+0.25+0.25+8(0.05)=0.91<1.$

Now, we verify the inequality \eqref{A3} for $T^2$. The cases  $x,y\in [0,1)$ and $x=y=1$ are trivial. Consider the worst case $x = 1, y = 0$:
$$
H(T^2(1), T^2(0)) = d\left( \frac{1}{4}, 0 \right) = \frac{1}{64}.
$$
The RHS of the inequality \eqref{A3} gives 
$$
a(1) d(1,0) + b(1) d(1, T^2(1)) + c(1) d(0, T^2(0)) + e(1) d(1, T^2(0)) + f(1) d(0, T^2(1)).
$$
By direct computation 
\begin{align*}
	d(1,0) &= 1,~d(1, T^2(1)) = 27/64, ~ d(0, T^2(0))=0 ~d(1, T^2(0)) =1, d(0, T^2(1)) =0. 
\end{align*}
RHS = $0.005(1) + 0.25(27/64) + 0.25(0) + 0.05(1) + 0.05(0) =0.16046875
 $. LHS = $1/64 = 0.015625$. So $0.015625 \leq 0.16046875$ holds. Thus $T^2$ satisfies the inequality  \eqref{A3} for all $x, y \in X$. Since $T(0) = \{0\}$, we have $0 \in T(0)$, so $0$ is a fixed point of $T$. Moreover, $T(0) = \{0\}$, so $0 \in \operatorname{SFix}(T)$. Also, $a(t) + e(t) + f (t)=  \frac{t}{100(1+t)} + 0.05 + 0.05 < 1$, so by Theorem \ref{Main6}, $\operatorname{SFix}(T) = \operatorname{Fix}(T) = \{0\}$.
 
 In constract, \cite[Theorem 3.2]{Chaib2026} and \cite[Corollary 4.8]{Li}  are  not applicable since 
 	$$s\lim_{t \to 0^+}c(t) + s^2 \lim_{t \to 0^+}e(t) = 1.8 >1,  s\lim_{t \to 0^+}b(t) + s^2\lim_{t \to 0^+} f (t)= 1.8>1,\text{ and } a(t)+b(t)+c(t)+e(t)+f(t)> \frac{1}{s}.$$ This example illustrates that the inclusion of 
 	$h$-upper semicontinuity in the hypotheses of the theorem is indeed sufficient to guarantee the conclusion, since  $$\displaystyle s\lim_{t \to 0^+}b(t) + s\lim_{t \to 0^+} f (t)=1.2>1 \text{ and } \displaystyle s\lim_{t \to 0^+}c(t) + s \lim_{t \to 0^+}e(t)=1.2>1.$$
\end{example}
\section{Conclusion}
 In this paper, we have established new existence and uniqueness theorems for 
 $p$-Hardy–Rogers-type multivalued mappings in complete $b$-metric spaces. By replacing constant coefficients with distance-dependent control functions, incorporating a $p$-order iterative structure, and including $h$-upper semicontinuity among the hypotheses, our framework significantly extends and unifies several existing fixed point results. These findings offer a unified perspective on multivalued fixed point theory in  $b$-metric spaces and open up new directions for research on generalized contractive mappings and their applications.\\
 \begin{comment}
 
 %\subsection*{Acknowledgements} The authors are grateful to Professor Vladimir Andrievskii for his help and support during the writing of this paper.

\noindent
\textbf{Declarations}
 \vskip 0.2in

\noindent
\textbf{Ethical Approval}\\
Not applicable.\\

\noindent
\textbf{Availability of Data and Material}\\
 Not applicable.\\

\noindent
\textbf{Conflict of Interests}\\
 The authors declare that they have no conflict of interest.\\

\noindent \textbf{Funding}\\
The authors received no funding for this research.		\\

\noindent
\textbf{{Authors' Contributions}}\\
Conceptualization of the article was given by HSA, methodology
by HSA, formal analysis, investigation and writing–original draft preparation by HSA and OTM, 
 writing–review and editing by OTM, project administration and supervision  by OTM. 
 All authors have accepted responsibility for the entire content of this manuscript
and approved its submission.

 \end{comment}


\begin{thebibliography}{99}
	
	\bibitem{Banach} S. Banach. Sur les opérations dans les ensembles abstraits et leur application aux équations intégrales, {\it Fund. Math.} {\bf 3} (1922), 133-181.
		
		\bibitem{Hardy} G. E. Hardy \& T. D. Rogers. A generalization of a fixed point theorem of Reich, {\it Canad. Math. Bull.} {\bf 16} (1973), no. 2, 201-206.
		
		\bibitem{Kannan} R. Kannan, Some results on fixed points, \textit{Bull. Calcutta Math. Soc.}\textbf{ 60} (1968), 71--76.
		
			\bibitem{Chatterjea} S. K. Chatterjea, Fixed-point theorems, \textit{Dokl. Bulg. Acad. Sci.} \textbf{25 }(1972), 727--730.
			
		\bibitem{Reich} S. Reich, Kannan's fixed point theorem, \textit{Bull. Univ. Mat. Italiana} \textbf{4} (1971), 1--11.
	
			\bibitem{Nadler} S. B. Nadler. Multivalued contraction mappings, {\it Pacific J. Math.} {\bf 30} (1969), no. 2, 475-488.
		
			\bibitem{Czerwik1998} S. Czerwik. Nonlinear set-valued contraction mappings in $b$-metric spaces, \textit{Atti Semin. Mat. Fis. Univ. Modena} \textbf{46} (1998), 263--276.
		
	\bibitem{Bakhtin} I. Bakhtin. The contraction mapping principle in quasimetric spaces, {\it Funct. Anal.} {\bf 30} (1989), 26-37.
	
	\bibitem{Czerwik} S. Czerwik. Contraction mappings in $b$-metric spaces, {\it Acta Math. Inform. Univ. Ostraviensis} {\bf 1} (1993), no. 1, 5-11.
	
	\bibitem{Lael} F. Lael, N. Saleem, \& M. Abbas. On the fixed points of multivalued mappings in $b$-metric spaces and their application to linear systems, {\it UPB Sci. Bull. Ser. A} {\bf 82} (2020), no. 4, 121-130.
	
	\bibitem{Mitrovic} Z. D. Mitrovic. Fixed point results in $b$-metric space, {\it Fixed Point Theory} {\bf 20} (2019), no. 2, 559-566.
	
	\bibitem{Khojasteh} F. Khojasteh and V. Rakočević. Some new common fixed point results for generalized contractive multi-valued non-self-mappings, \textit{Appl. Math. Lett.} \textbf{25} (2012), 287--293.
	
	\bibitem{Leyew} B. T. Leyew \& O. T. Mewomo. Common fixed point results for generalized orthogonal $F$-Suzuki contraction for a family of multivalued mappings in orthogonal $b$-metric spaces, {\it Commun. Korean Math. Soc.} {\bf 37} (2022), no. 4, 1147-1170.
	
	\bibitem{Joonaghany} G. H. Joonaghany, O. T. Mewomo, \& F. Khojasteh. Weak KKH-Caristi type expansion on complete $b$-metric and $b$-ordered metric spaces and its consequences, {\it Afr. Mat.} {\bf 32} (2021), no. 7-8, 1281-1294.
	
		\bibitem{Andres} J. Andres, J. Fišer, and L. Gorniewicz. Fixed points and sets of multivalued contractions: An advanced survey with some new results, \textit{Fixed Point Theory} \textbf{22} (2021), no. 1, 15.
	
	\bibitem{Aliouche} A. Aliouche \& T. Hamaizia. Common fixed point theorems for multivalued mappings in $b$-metric spaces with an application to integral inclusions, {\it J. Anal.} {\bf 30} (2022), no. 1, 43-62.
	
	\bibitem{Karapinar2022} E. Karapınar, A. Ali, A. Hussain, \& H. Aydi. On interpolative Hardy-Rogers type multivalued contractions via a simulation function, {\it Filomat} {\bf 36} (2022), no. 8, 2847-2856.
	
	\bibitem{AliHussain} A. Ali, A. Hussain, \& Z. D. Mitrović. Multivalued Hardy-Rogers type $Z\Theta$-contraction and generalized simulation functions, {\it Filomat} {\bf 36} (2022), no. 1, 1-14.
	
	\bibitem{AliKhan} B. Ali, A. A. Khan, \& A. Hussain. Best proximity points of multivalued Hardy-Roger's type (cyclic) contractive mappings of $b$-metric spaces, {\it J. Math.} {\bf 2022} (2022), 11.
	
	\bibitem{Errai} Y. Errai, E. M. Marhrani, \& M. Aamri. Some new results of interpolative Hardy–Rogers and Ćirić–Reich–Rus type contraction, {\it J. Math.} {\bf 2021} (2021).
	
	\bibitem{Patel} D. Patel, M. Younis, D. Singh, \& O. P. Chauhan. Banach contraction induced by Hardy-Rogers contractions: computation and applications, {\it Comput. Appl. Math.} {\bf 45} (2026), 358.
	
	\bibitem{Aleksic} S. Aleksic, Z. D. Mitrovic, \& S. Radenovic. On some recent fixed point results for single and multivalued mappings in b-metric spaces, {\it Fasc. Math.} {\bf 61} (2018), 5-16.
	
	\bibitem{Chifu} C. Chifu and G. Petruşel. Fixed point results for multi-valued Feng-Liu contractions in $b$-metric spaces, \textit{Fixed Point Theory} \textbf{25} (2024), no. 2,  507--518.
		
		\bibitem{Chaib} R. Chaib, F. Merghadi, \& Z. Mouhoubi. Improvement of fixed point theorems for Hardy–Rogers contraction type in $b$-metric spaces without F-contraction assumption, {\it Rend. Circ. Mat. Palermo (2)} {\bf 72} (2023), no. 8, 4209-4237.
	
		\bibitem{Al-Izeri} A. M. Al-Izeri and K. Latrach. A note on fixed point theory for multivalued mappings, \textit{Fixed Point Theory} \textbf{24} (2023), no. 1, 233--240.
	
	\bibitem{Chaib2026} R. Chaib, F. Merghadi, \& Z. Mouhoubi. Fixed point theorems for multivalued mappings of Hardy-Rogers type in $b$-metric spaces, {\it Fixed Point Theory} {\bf 27} (2026), no. 1, 209-228.
	
	
	\bibitem{Chifu2017} C. Chifu and G. Petruşel. Fixed point results for multivalued Hardy–Rogers contractions in $b$-metric spaces, \textit{Filomat} \textbf{31} (2017), no. 8, 2499--2507.
	
	
	
	\bibitem{Singh2005} S. L. Singh, C. Bhatnagar, and S. N. Mishra. Stability of iterative procedures for multivalued maps in metric spaces, \textit{Demonstr. Math.} \textbf{38} (2005), no. 4, 905--916.
	
	\bibitem{Covitz} H. Covitz and S. B. Nadler. Multi-valued contraction mappings in generalized metric spaces, \textit{Israel J. Math.} \textbf{8} (1970), no. 1, 5--11.
	
		\bibitem{Miculescu} R. Miculescu \& A. Mihail. New fixed point theorems for set-valued contractions in $b$-metric spaces, {\it J. Fixed Point Theory Appl.} {\bf 19} (2017), no. 3, 2153-2163.
	
		\bibitem{Li} F. H. S. F. Li and S. M. Lu. Several new fixed point results for multi-valued quasi-contractions in $b$-metric spaces, \textit{Fixed Point Theory} \textbf{26} (2025), no. 1, 197.
			
		\bibitem{Suzuki} T. Suzuki. Basic inequality on a $b$-metric space and its applications, {\it J. Inequal. Appl.} {\bf 2017} (2017), 11.
\end{thebibliography}
	\end{document}